\newlength{\defbaselineskip}
\newcommand{\setlinespacing}[1]%
           {\setlength{\baselineskip}{#1 \defbaselineskip}}
\theoremstyle{plain}
\newtheorem{thm}{Theorem}[section]
\newtheorem{cor}[thm]{Corollary}
\newtheorem{lem}[thm]{Lemma}
\newtheorem{prop}[thm]{Proposition}
\newtheorem{Qes}[thm]{Question}
\makeatletter\@addtoreset{equation}{section} \makeatother
\begin{document}

\title {Power dilation systems $\{f(z^k)\}_{k\in\mathbb{N}}$ in Dirichlet-type spaces}
\author{
Hui Dan \quad Kunyu Guo
 }
\date{}
 \maketitle \noindent\textbf{Abstract:}
 In this paper, we concentrate on power dilation systems $\{f(z^k)\}_{k\in\mathbb{N}}$ in Dirichlet-type spaces $\mathcal{D}_t\ (t\in\mathbb{R})$.
 When $t\neq0$, we  prove that $\{f(z^k)\}_{k\in\mathbb{N}}$ is orthogonal in $\mathcal{D}_t$ only if
 $f=cz^N$ for some constant $c$ and some positive integer $N$.
 We  also  give  complete characterizations of  unconditional bases and frames formed by power dilation systems for Drichlet-type spaces.

 \vskip 0.1in \noindent \emph{Keywords:}
power dilation system; Dirichlet-type space; orthogonal system;  unconditional basis; frame.

\vskip 0.1in \noindent\emph{2010 AMS Subject Classification:}
46E20; 47B38; 46A35; 46B15; 42C05. 

\section{Introduction}

In three papers, published in the 1940s, by Bourgin and Mendel \cite{BM} and by Bourgin \cite{Bour1,Bour2}, a fundamental class of  functions in the Lesbegue space $L^2(0,1)$ was studied. That is,
those functions $\psi\in L^2(0,1)$ satisfying the following
orthogonality relations:
$$\langle \psi(kx),\psi(lx)\rangle=\delta_{kl},\quad k,l=1,2,\cdots,$$ where $\psi$ is identified with its odd $2$-periodic extension on the real line $\mathbb{R}$.
They first found that there exists a function $\psi\in L^2(0,1)$, which is not a constant multiple  of $\sin N\pi x$ for some ${N\in\mathbb{N}}$, such that  $\{\psi(kx)\}_{k\in\mathbb{N}}$  is orthogonal  in   $L^2(0,1)$ \cite{BM}.
Although they obtained lots of results on this subject,
they did not completely characterize such a class of functions.
In these papers, they also studied the problem
that for which $\varphi\in L^2(0,1)$,
the periodic dilation system $\{\varphi(kx)\}_{k\in\mathbb{N}}$
is complete, namely  $\{\varphi(kx)\}_{k\in\mathbb{N}}$ spans a dense linear subspace of   $L^2(0,1)$. This problem was first considered by Wintner \cite{Win} and Beurling \cite{Beu}.

The classical treatment for the above problems is to associate $L^2(0,1)$-functions with Dirichlet series by using coefficients of their Fourier-sine expansions (see \cite{Win,Beu,Sei}). This was systematized in the frame of Hilbert spaces due to Hedenmalm,  Lindqvist and  Seip's work in \cite{HLS}. More precisely, they introduced the Hardy space of Dirichlet series  $$\mathcal{H}^2=\{f=\sum_{n=1}^\infty a_nn^{-s}:\|f\|^2=\sum_{n=1}^\infty|a_n|^2<\infty\},$$
and studied periodic dilation systems in $L^2(0,1)$ via a unitary transform \begin{eqnarray} \label{operator U}
                                                 U:\quad L^2(0,1)& \rightarrow &
  \mathcal{H}^2 , \notag\\
  \sum_{n=1}^\infty a_n\varphi_n& \mapsto & \sum_{n=1}^\infty a_nn^{-s}. \end{eqnarray}
Here $\varphi_n(t)=\sqrt{2}\sin n\pi t$, and $\{\varphi_n\}_{n\in\mathbb{N}}$ is a canonical orthonormal basis for $L^2(0,1)$.

Building on Beurling's ideas \cite{Beu},
they also used the Bohr transform  to establish a connection between the space $\mathcal{H}^2$ and the Hardy space $H^2(\mathbb{D}_2^\infty)$ over the infinite polydisk, which allows  applications of the Hardy space technique to
problems in $L^2(0,1)$. To be more specific, let $p_m\ (m\in\mathbb{N})$ denote the $m$-th prime,
and
for each positive integer $n$ with prime factorization $n=p_1^{\alpha_1}\cdots p_m^{\alpha_m}$, 
write $\zeta^{\alpha(n)}$ for the monimial $\zeta_1^{\alpha_1}\cdots\zeta_m^{\alpha_m}$.
The Hardy space $H^2(\mathbb{D}_2^\infty)$ is defined to be the Hilbert space consisting of formal power series
$F(\zeta)=\sum_{n=1}^\infty a_n \zeta^{\alpha(n)} $
 satisfying
$\|F\|^2= \sum_{n=1}^\infty |a_n|^2 <\infty.$
Setting
$$\mathbb{D}_2^\infty=\{\zeta=(\zeta_1,\zeta_2,\cdots)\in l^2:|\zeta_n|<1\  \mathrm{for}\ \mathrm{all}\ n\geq1\}$$
and applying the Cauchy-Schwarz inequality, one see that every series in $H^2(\mathbb{D}_2^\infty)$ converges absolutely in $\mathbb{D}_2^\infty$ and is
holomorphic in $\mathbb{D}_2^\infty$.
A brilliant observation by Bohr is that a Dirichlet series can be transformed into
 a power series  in infinitely many variables
via the``variable" substitution \cite{Bo}
 $$\zeta_1=p_1^{-s},\zeta_2=p_2^{-s},\cdots,$$
 which defines  the unitary transform \cite{HLS}
 \begin{eqnarray} \label{operator B}\mathbf{B}:\mathcal{H}^2& \rightarrow &
  H^2(\mathbb{D}_2^\infty) , \notag\\
  \sum_{n=1}^\infty a_nn^{-s}& \mapsto &\sum_{n=1}^\infty a_n\zeta^{\alpha(n)}.\end{eqnarray}

By the unitary transform $U$ defined in (\ref{operator U}), we can now
assign a special class of Dirichlet series in $\mathcal{H}^2$ --- $\mathcal{H}^2$-inner functions   to orthogonal dilation systems $\{\varphi(kx)\}_{k\in\mathbb{N}}$ in $L^2(0,1)$.
A Dirichlet series $D\in \mathcal{H}^2$ is said to be $\mathcal{H}^2$-inner if
$\{k^{-s}D\}_{k\in\mathbb{N}}$ is orthogonal in  $\mathcal{H}^2$.
This notion is introduced
 by  Olofsson in a more general setting \cite{O}. He indicated that every $\mathcal{H}^2$-inner function can be identified with
an isometric multiplier on $\mathcal{H}^2$, and further an inner function in the Hardy space $H^2(\mathbb{D}_2^\infty)$  via the Bohr transform $\mathbf{B}$ in (\ref{operator B}) (see Section 2 for details).
For complete periodic  dilation systems in $L^2(0,1)$, a complete characterization in terms of cyclic vectors for multiplier algebras of both functions spaces $\mathcal{H}^2$ and $H^2(\mathbb{D}_2^\infty)$  was given in \cite{HLS}: $\{\varphi(kx)\}_{k\in\mathbb{N}}$ is complete in $L^2(0,1)$
if and only if $U\varphi$ is cyclic in $\mathcal{H}^2$, if and only if $\mathbf{B}U\varphi$ is cyclic in $H^2(\mathbb{D}_2^\infty)$.
Besides the completeness problem, Riesz basis for $L^2(0,1)$ formed by dilation systems $\{\psi(kt)\}_{k\in\mathbb{N}}$ was also completely
characterized in \cite{HLS}. Riesz bases are known as the most tractable class of bases for Hilbert spaces.
They gave the following
sufficient and necessary condition for Riesz bases: $U\psi$
is  invertible  in the multiplier algebra  of $\mathcal{H}^2$, equivalently, $$0<c\leq|\mathbf{B}U\psi(\zeta)|\leq C,\quad\zeta\in\mathbb{D}_2^\infty$$ for some  constants $c,C$.

The completeness problem was also studied by Nikolski in the context of the Hardy space $H^2(\mathbb{D})$ over the unit disk.
To link $L^2(0,1)$ and $H^2(\mathbb{D})$, he use the unitary transform $V:H_0^2(\mathbb{D})\rightarrow L^2(0,1)$ defined by $Vz^n=\sqrt{2}\sin n\pi t$ $(n\in\mathbb{N})$, where $$H_0^2(\mathbb{D})=\{f\in H^2(\mathbb{D}):f(0)=0\}.$$
The Bohr transform for the Hardy space was also given in \cite{Ni}:
\begin{eqnarray*}\mathcal{B}:\quad H_0^2(\mathbb{D})& \rightarrow &
  H^2(\mathbb{D}_2^\infty),\\
  \sum_{n=1}^\infty a_nz^n& \mapsto & \sum_{n=1}^\infty a_n\zeta^{\alpha(n)},\end{eqnarray*}
  which satisfies $\mathbf{B}U=\mathcal{B}V^{-1}$ on $L^2(0,1)$.
A simple verification gives that $V^{-1}$ maps a dilation system
$\{\psi(kt)\}_{k\in\mathbb{N}}$ in $L^2(0,1)$ into a power dialtion system $\{(V^{-1}\psi)(z^k)\}_{k\in\mathbb{N}}$ in $H_0^2(\mathbb{D})$. Thus
all afore-mentioned  results are naturally translated into the analogous ones for power dilation systems $\{f(z^k)\}_{k\in\mathbb{N}}$ in
$H_0^2(\mathbb{D})$.
\begin{thm}[\cite{O,HLS,Ni}]\label{collection thm} Suppose that $f$ is a nonzero function in $H_0^2(\mathbb{D})$. Then
 \begin{itemize}
  \item [(1)] $\{f(z^k)\}_{k\in\mathbb{N}}$ is orthogonal in
$H_0^2(\mathbb{D})$ if and only if $\mathcal{B}f$ is a constant multiple of  inner functions in $H^2(\mathbb{D}_2^\infty)$;
  \item [(2)] $\{f(z^k)\}_{k\in\mathbb{N}}$ is complete in
$H_0^2(\mathbb{D})$ if and only if $\mathcal{B}f$ is cyclic in $H^2(\mathbb{D}_2^\infty)$;
  \item [(3)] $\{f(z^k)\}_{k\in\mathbb{N}}$ is a Riesz basis for
$H_0^2(\mathbb{D})$ if and only if $\mathcal{B}f$ is bounded from both above and below on $\mathbb{D}_2^\infty$.
\end{itemize}
\end{thm}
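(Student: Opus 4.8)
The plan is to push the entire problem to the infinite‑variable Hardy space $H^2(\mathbb{D}_2^\infty)$ via the unitary $\mathcal{B}$, and then to recognize the three conditions as the $H^2(\mathbb{D}_2^\infty)$‑translates of the known criteria for $\mathcal{H}^2$ and $L^2(0,1)$ (available through $\mathbf{B}U=\mathcal{B}V^{-1}$). Put $g=\mathcal{B}f$, a nonzero vector of $H^2(\mathbb{D}_2^\infty)$. The one computation that does all the work is the intertwining identity
$$\mathcal{B}\bigl(f(z^k)\bigr)=\zeta^{\alpha(k)}\,\mathcal{B}f,\qquad k\in\mathbb{N},$$
which is nothing but complete multiplicativity of prime factorization: for $f=\sum_{n\geq1}a_nz^n$ one has $f(z^k)=\sum_{n\geq1}a_nz^{kn}$ and $\alpha(kn)=\alpha(k)+\alpha(n)$, so $\zeta^{\alpha(kn)}=\zeta^{\alpha(k)}\zeta^{\alpha(n)}$. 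Since $n\mapsto\alpha(n)$ is a bijection of $\mathbb{N}$ onto the finitely supported multi‑indices, $\{\zeta^{\alpha(k)}\}_{k\in\mathbb{N}}$ is exactly the monomial orthonormal basis of $H^2(\mathbb{D}_2^\infty)$; hence $\mathcal{B}$ identifies the power dilation system $\{f(z^k)\}_{k\in\mathbb{N}}$ with the system $\{\zeta^\alpha g\}_\alpha$ of all monomial multiples of $g$, and unitarity of $\mathcal{B}$ means ``orthogonal'', ``complete'' and ``Riesz basis'' transfer verbatim.

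Then I would read off each item. For (1): $\{f(z^k)\}_k$ is orthogonal iff $\{\zeta^\alpha g\}_\alpha$ is; passing to boundary values on $\mathbb{T}^\infty$, $\langle\zeta^\alpha g,\zeta^\beta g\rangle=\int_{\mathbb{T}^\infty}\zeta^{\alpha-\beta}|g|^2\,dm$, so this says every nonzero Fourier coefficient of $|g|^2\in L^1(\mathbb{T}^\infty)$ vanishes, i.e. $|g|$ is a.e. constant on $\mathbb{T}^\infty$, i.e. $g$ is a constant multiple of an inner function of $H^2(\mathbb{D}_2^\infty)$ --- the conclusion of \cite{O}. For (2): $\{f(z^k)\}_k$ is complete iff $\overline{\operatorname{span}}\{\zeta^\alpha g\}_\alpha=H^2(\mathbb{D}_2^\infty)$, and since the closed linear span of the monomial multiples of $g$ coincides with the closure of $\{\varphi g:\varphi\in H^\infty(\mathbb{D}_2^\infty)\}$ (approximate $\varphi$ by Fej\'er‑type polynomials, then use dominated convergence on $\mathbb{T}^\infty$), this is precisely cyclicity of $g$ --- the criterion of \cite{HLS,Ni}. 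For (3): $\{f(z^k)\}_k$ is a Riesz basis iff $\{\zeta^\alpha g\}_\alpha$ is a Riesz basis of $H^2(\mathbb{D}_2^\infty)$; as $\{\zeta^\alpha\}$ is already an orthonormal basis, this happens iff the densely defined operator $\zeta^\alpha\mapsto\zeta^\alpha g$ (multiplication by $g$) extends boundedly and invertibly, i.e. $g$ is an invertible multiplier of $H^2(\mathbb{D}_2^\infty)$. Using that the multiplier algebra is $H^\infty(\mathbb{D}_2^\infty)$ and that an invertible multiplier is necessarily nonvanishing on $\mathbb{D}_2^\infty$ with bounded reciprocal (nonvanishing follows by testing $\{\varphi g\}$ against reproducing kernels), this is exactly $0<c\leq|g(\zeta)|\leq C$ on $\mathbb{D}_2^\infty$ --- the conclusion of \cite{HLS}.

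The routine part is the bookkeeping that $\mathcal{B}$ preserves orthogonality, completeness and the Riesz‑basis property (it is unitary) and that the concrete span/operator conditions on $g$ match the abstract notions (inner, cyclic, invertible multiplier). The genuinely hard input, which I would quote rather than reprove, is the structure theory of $H^2(\mathbb{D}_2^\infty)$ underlying (1) and (3): that inner functions are precisely those with constant boundary modulus on $\mathbb{T}^\infty$, and above all the identification of the multiplier algebra with $H^\infty(\mathbb{D}_2^\infty)$ together with the description of its invertible elements by a two‑sided modulus bound on the $\ell^2$‑polydisk. That is where the depth of \cite{O} and \cite{HLS} lies; past the intertwining identity, the theorem is pure translation.
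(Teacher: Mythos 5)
Your proposal is correct and takes essentially the same route as the paper and the sources it cites: conjugate by the Bohr transform via the intertwining identity $\mathcal{B}\bigl(f(z^k)\bigr)=\zeta^{\alpha(k)}\mathcal{B}f$, so the power dilation system becomes the family of all monomial multiples of $g=\mathcal{B}f$, and then invoke the known characterizations of constant-modulus (inner), cyclic, and invertible-multiplier elements of $H^2(\mathbb{D}_2^\infty)$. This is exactly how the paper treats the matter (compare Proposition \ref{p0}, Lemma \ref{l1}, Proposition \ref{c1} for orthogonality and Theorem \ref{t2} for the Riesz basis criterion), with the genuinely deep inputs correctly identified and quoted rather than reproved.
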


Note that for $f=cz^N\ (c\in\mathbb{C},N\in\mathbb{N})$, $\{f(z^k)\}_{k\in\mathbb{N}}$ is always orthogonal in
$H_0^2(\mathbb{D})$. Furthermore, one can use Theorem \ref{collection thm} (1) to construct various nontrivial examples for orthogonal power dilation systems in
$H_0^2(\mathbb{D})$. In fact, given $a\in\mathbb{D}$, we consider  the inner function
$$F(\zeta)=\frac{a-\zeta_1}{1-\bar{a}\zeta_1}
=a-(1-|a|^2)\sum_{n=1}^{\infty}\bar{a}^{n-1}\zeta_1^n,\quad  \zeta\in\mathbb{D}_2^\infty,$$
and put $$f=\mathcal{B}^{-1}F=az-(1-|a|^2)\sum_{n=1}^{\infty}\bar{a}^{n-1}z^{2^n}.$$
Then  $\{f(z^k)\}_{k\in\mathbb{N}}$ is orthogonal in
$H_0^2(\mathbb{D})$.

A natural question is whether similar results on power dilation systems hold  in the
Bergman space $L_{a,0}^2(\mathbb{D})=L_a^2(\mathbb{D})\ominus\mathbb{C}$, or general Dirichlet-type spaces $\mathcal{D}_t$ on the unit disk.
\begin{Qes}
For which function $f\in L_{a,0}^2(\mathbb{D})$ (or $\mathcal{D}_t$), is the system $\{f(z^k)\}_{k\in\mathbb{N}}$  orthogonal, complete, or a Riesz basis?
\end{Qes}
The Dirichlet-type space $\mathcal{D}_t\ (t\in\mathbb{R})$  is defined as
$$
\mathcal{D}_t=\{f=\sum_{n=1}^\infty a_nz^n\in \mathrm{Hol}(\mathbb{D}):\|f\|_t^2=\sum_{n=1}^\infty|a_n|^2(n+1)^t<\infty\},
$$ where $\mathrm{Hol}(\mathbb{D})$ denote the set of holomorphic functions on $\mathbb{D}$.
In particular, $\mathcal{D}_0$ is the Hardy space $H_0^2(\mathbb{D})$,  $\mathcal{D}_{-1}$ is the Bergman space $L_{a,0}^2(\mathbb{D})$, and  $\mathcal{D}_1$ is the (ordinary) Dirichelt space.
For each $t\in\mathbb{R}$, the only orthogonal basis for $\mathcal{D}_t$ formed by a power dilation system $\{f(z^k)\}_{k\in\mathbb{N}}$  is obtained from the identity map $f(z)=z$ up
  to a constant multiple. However, unlike the Hardy space case, orthogonal power dilation systems in other Dirichelt-type spaces, as our main result stated below, only have trivial forms.
\begin{thm}\label{t1}
Suppose $t\neq0$ and $f\in \mathcal{D}_t$. Then  $\{f(z^k)\}_{k\in\mathbb{N}}$ is orthogonal in $\mathcal{D}_t$ if and only if $f(z)=cz^N$ for some constant $c$ and some positive integer $N$.
\end{thm}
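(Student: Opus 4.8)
The ``if'' direction is immediate: if $f=cz^N$ then $f(z^k)=cz^{Nk}$, and monomials of distinct degrees are orthogonal in $\mathcal{D}_t$. For ``only if'', write $f=\sum_{n\ge1}a_nz^n\ne0$ and assume $\{f(z^k)\}_k$ is orthogonal. The first step is the routine computation of inner products: for $k\ne l$ with $d=\gcd(k,l)$, $k=dp$, $l=dq$, $\gcd(p,q)=1$,
\[
\langle f(z^k),f(z^l)\rangle_t=\sum_{j\ge1}a_{qj}\bar a_{pj}\,(dpqj+1)^t ,
\]
the series converging absolutely by Cauchy--Schwarz and $f\in\mathcal{D}_t$. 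So orthogonality is equivalent to the vanishing of every such sum for all coprime $p\ne q$ and all $d\ge1$. A preliminary reduction disposes of $f$ with finite support: if $\mathrm{supp}(f)$ is finite with smallest element $n_1$ and largest $n_r>n_1$, put $g=\gcd(n_1,n_r)$, $p=n_1/g$, $q=n_r/g$; then in $\langle f(z^p),f(z^q)\rangle_t$ only $j=g$ survives (smaller $j$ kills $a_{pj}$, larger $j$ kills $a_{qj}$), leaving $a_{n_r}\bar a_{n_1}(pqg+1)^t\ne0$ --- a contradiction. Hence a finitely supported orthogonal $f$ is a monomial, and it remains to rule out infinite support.

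For $t\notin\mathbb{Z}_{>0}$ this is the easy case. Fix coprime $p\ne q$, set $c_j=a_{qj}\bar a_{pj}$ and $\mu_j=pqj$; then $G(w):=\sum_j c_j(\mu_j+w)^t$ vanishes at $w=1/d$ for every $d\ge1$. Using $\sum_j|c_j|\mu_j^{t}<\infty$ (from $f\in\mathcal{D}_t$) together with $\mu_j^{t-i}\le\mu_1^{-i}\mu_j^{t}$, the series $G$ is holomorphic on $|w|<pq$, so it vanishes identically; comparing Taylor coefficients at $0$ gives $\binom{t}{i}\sum_j c_j\mu_j^{t-i}=0$ for all $i\ge0$, and $\binom ti\ne0$ yields $\sum_j c_jj^{t-i}=0$ for all $i\ge0$. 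Letting $j_0$ be the least index with $c_{j_0}\ne0$ and sending $i\to\infty$ in $c_{j_0}=-\sum_{j>j_0}c_j(j/j_0)^{t-i}$ (dominated convergence) forces $c_{j_0}=0$; so $a_{qj}\bar a_{pj}=0$ for all $j$ and all coprime $p\ne q$. Finally, if $a_m\ne0\ne a_n$ with $m\ne n$, writing $g=\gcd(m,n)$ and taking $p=m/g$, $q=n/g$, $j=g$ contradicts this; hence $\mathrm{supp}(f)$ is a singleton and $f=cz^N$.

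The genuinely delicate case is $t\in\mathbb{Z}_{>0}$, which I expect to be the main obstacle. Here $\sum_j c_j(d\mu_j+1)^t$ is merely a degree-$t$ polynomial in $d$, so vanishing at all $d$ gives only the finitely many moment relations $\sum_j a_{qj}\bar a_{pj}\,j^m=0$, $m=0,\dots,t$, and the argument is no longer self-contained inside $\mathcal{D}_t$. First I would reduce to $t=1$: for $0\le m\le t$ put $f_m=\sum_n a_nn^{m/2}z^n\in\mathcal{D}_{t-m}$; a short computation gives $\langle f_m(z^{dp}),f_m(z^{dq})\rangle_{t-m}=(pq)^{m/2}\sum_j a_{qj}\bar a_{pj}j^m(dpqj+1)^{t-m}$, which the moment relations force to vanish for all $d$, so $\{f_m(z^k)\}_k$ is orthogonal in $\mathcal{D}_{t-m}$; taking $m=t-1$ replaces $f$ by $f_{t-1}\in\mathcal D_1$ with the same support. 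So assume $t=1$. Then the relations for $m=0$ and $m=1$ say, via the Bohr transform, that both $\mathcal{B}f$ and $\mathcal{B}\big(\sum_n a_n\sqrt n\,z^n\big)$ are constant multiples of inner functions in $H^2(\mathbb{D}_2^\infty)$; writing $G=\mathcal{B}f$, one has $\mathcal{B}\big(\sum_n a_n\sqrt n\,z^n\big)=G\circ S$, where $S$ is the coordinate dilation $\zeta_i\mapsto\sqrt{p_i}\,\zeta_i$. The crux is then the rigidity statement: \emph{an inner function $G$ in $H^2(\mathbb{D}_2^\infty)$ such that $G\circ S$ is again a constant multiple of an inner function must be a monomial $c\,\zeta^{\alpha(N)}$.} The point is that $G\circ S\in H^2(\mathbb{D}_2^\infty)$ imposes the extra summability $\sum_n|b_n|^2n<\infty$ on the coefficients of $G$, which makes $G$ extend holomorphically past the distinguished boundary and hence forces it to be a finite Blaschke product; but the zeros of $G$ and the reflected poles of an inner $G\circ S$ would have to be interchanged by the expanding map $S$ (all $\sqrt{p_i}>1$), which is impossible unless $G$ has no zeros. (When $\mathrm{supp}(f)$ lies in the powers of a single prime this is literally the one-variable fact ``$\phi$ and $w\mapsto\phi(\lambda w)$ both inner with $\lambda>1$ force $\phi(w)=cw^N$'', proved by matching zeros and poles of finite Blaschke products; the general case is its several-variable analogue, and making it precise is the technical heart of the argument.) This contradicts infinite support, and the proof is complete.
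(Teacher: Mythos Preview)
Your argument for $t\notin\mathbb{Z}_{>0}$ is correct and in fact pleasantly elementary: you never need the Bohr transform, and you obtain the strong pointwise conclusion $a_{pj}\bar a_{qj}=0$ for every $j$ and every coprime pair $p\ne q$, which is more than the paper extracts. The reduction from $t\in\mathbb{Z}_{>0}$ to $t=1$ via $f_m=\sum_n a_nn^{m/2}z^n$ is also fine.

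The gap is exactly where you flag it: the rigidity statement for $t=1$. You assert that if $G$ and $G\circ S$ are both constant multiples of inner functions in $H^2(\mathbb{D}_2^\infty)$ (with $S:\zeta_i\mapsto\sqrt{p_i}\,\zeta_i$) then $G$ is a monomial, but you only sketch the one-variable case. In infinitely many variables there is no ready-made notion of ``finite Blaschke product'', the zero/pole matching argument has no obvious analogue, and the extension ``past the distinguished boundary'' that $\sum|a_n|^2n<\infty$ buys you is into a domain of the shape $\prod\sqrt{p_i}\,\mathbb{D}$, which is not a polydisk and on which no classical factorization theory is available. So as it stands this step is genuinely incomplete.

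The paper closes precisely this gap, and does so uniformly in $t\ne0$, by a short operator-theoretic trick that bypasses any function-theoretic rigidity. From orthogonality it derives only two moment relations (your $m=0,1$ when $t=1$): for coprime $i\ne j$,
\[
\sum_{n\ge1}\bar a_{ni}a_{nj}(nij)^t=0
\qquad\text{and}\qquad
\sum_{n\ge1}\bar a_{ni}a_{nj}(nij)^{t-1}=0.
\]
Set $F=\sum_n a_n n^{t/2}\zeta^{\alpha(n)}\in H^2(\mathbb{T}^\infty)$ and $\tau=(p_1^{-1/2},p_2^{-1/2},\dots)$, so that $F_\tau(\zeta)=F(\tau\zeta)=\sum_n a_n n^{(t-1)/2}\zeta^{\alpha(n)}$. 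By Proposition~\ref{c1} the two relations say exactly that $|F|$ and $|F_\tau|$ are a.e.\ constant. A polarization identity (Proposition~\ref{c2}) then gives $F_{\tau^2}\bar F\equiv\text{const}$, and since $|F|^2$ is constant this means $F_{\tau^2}=cF$ for some scalar $c$. But $T_{\tau^2}:\zeta^{\alpha(n)}\mapsto n^{-1}\zeta^{\alpha(n)}$ is a diagonal contraction on $H^2(\mathbb{T}^\infty)$ with \emph{simple} spectrum $\{1/n:n\in\mathbb{N}\}$, so its only eigenvectors are monomials. Hence $F$, and therefore $f$, is a monomial.

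The point you were missing is to replace your expanding map $S$ by the \emph{contraction} $T_{\tau^2}$: then ``$G$ and $G\circ S$ both inner'' becomes ``$F$ and $F_\tau$ both of constant modulus'', and the conclusion follows from the soft fact that eigenvectors of a diagonal operator with distinct eigenvalues are basis vectors, with no need for any several-variable Blaschke theory.
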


As we will see in Section 3,  the completeness
problems of power dilation systems on Dirichlet-type spaces are all equivalent.
 On the other hand, when $t\neq0$, a power dilation system in $\mathcal{D}_t$ never forms a Riesz basis. This is because a Riesz basis is norm-bounded from both above and below, while every nonzero system $\{f(z^k)\}_{k\in\mathbb{N}}$ in $\mathcal{D}_t$ is  not for $t\neq0$.
However, we found that each $\mathcal{D}_t$
has many unconditional bases formed by power dilation systems.
So it should make more sense to find conditions when $\{f(z^k)\}_{k\in\mathbb{N}}$ forms an unconditional basis.
In Section 3, we will give a complete characterization of  unconditional bases formed by power dilation systems for Dirichlet-type spaces.
There are some other function spaces, where
problems on the existence of unconditional bases are raised, such as Hardy spaces $\mathcal{H}^p$ of Dirichlet series and Hardy spaces $H^p(\mathbb{T}^\infty)$ on the infinite  tours, see \cite{AOS,SS}.
We will also prove that a frame formed by a
power dilation system for $H_0^2(\mathbb{D})$ is a Riesz basis, while for $t\neq0$, a power dilation system in $\mathcal{D}_t$ never forms a frame.

\vskip2mm

Here we also mention a related  problem on the Hardy space $H^2(\mathbb{D})$ --- Rudin's orthogonality problem, which is  whether inner functions vanishing at $z=0$ are the only bounded holomorphic functions, up to constant multiple, whose powers are mutually orthogonal in $H^2(\mathbb{D})$. The answer to this problem is yes under some mild conditions \cite{Bou,CKS}, and no in general \cite{Bi1,Bi2,Sun}. The analogous problems on other spaces were also studied. See \cite{GZ}  on the Bergman space, and \cite{CCG} on the Dirichlet space.

\vskip2mm

This paper is arranged as follows.
In Section 2, we list some preliminaries and some preparatory results. In Section 3, we prove Theorem \ref{t1} and give a complete characterization of  unconditional bases and frames formed by power dilation systems for Drichlet-type spaces.
In Section 4, we present an application of our results to an operator moment problem.

\section{Preliminaries}


\subsection{The Hardy space on the infinite torus}
In this subsection, we will introduce the Hardy space $H^2(\mathbb{T}^\infty)$ on the infinite torus, and present some preparatory results.

Let $\mathbb{T}$ denote the unit circle $\{z\in\mathbb{C}:|z|=1\}$, and $\mathbb{T}^\infty$  the product of countably infinitely many circles
$$\mathbb{T}\times\mathbb{T}\times\cdots$$
endowed with the product topology. Obviously $\mathbb{T}^\infty$ is a compact Hausdorff group, and hence it possesses a Haar measure $\rho$. This measure $\rho$ is a product measure. More precisely, let $m$ denote the normalized arc
length measure on $\mathbb{T}$ with $m(\mathbb{T})=1$, and suppose that $E_1, E_2, \cdots, E_n$  are Borel subsets of $\mathbb{T}$, then
$$\rho(E_1\times E_2\times\cdots\times E_n\times\mathbb{T}\times\mathbb{T}\times\cdots )=\prod_{j=1}^{n}m(E_j).$$
Let
$\mathbb{Z}_+^{(\infty)}$ be the set of all finitely supported sequences of non-negative integers, that is, $\mathbb{Z}_+^{(\infty)}=\bigcup_{n=1}^\infty\mathbb{Z}_+^n$.
Define $H^2(\mathbb{T}^\infty)$ to be the closed subspace of $L^2(\mathbb{T}^\infty)=L^2(\mathbb{T}^\infty,\rho)$ spanned by the monomials $\{\zeta^\alpha:\alpha\in\mathbb{Z}_+^{(\infty)}\}$, which constitutes an orthonormal basis for  $H^2(\mathbb{T}^\infty)$.
Two Hardy spaces
 $H^2(\mathbb{T}^\infty)$ and $H^2(\mathbb{D}_2^\infty)$ can be identified with each other via the Poisson
integral.
See \cite{CG,HLS} for more details.

Recall that the Bohr transform $\mathcal{B}$ for the Hardy space
is defined as
\begin{eqnarray*}\mathcal{B}:\quad H_0^2(\mathbb{D})& \rightarrow &
  H^2(\mathbb{T}^\infty),\\
  \sum_{n=1}^\infty a_nz^n& \mapsto & \sum_{n=1}^\infty a_n\zeta^{\alpha(n)}.\end{eqnarray*}
A  function $\eta\in H^2(\mathbb{T}^\infty)$
 is said to be inner if $\eta$ is of modulus $1$ almost everywhere on $\mathbb{T}^\infty$.
The following result can be translated from \cite[Proposition 5.1]{O}.
\begin{prop}\label{p0} Let $f$ be a function in $H_0^2(\mathbb{D})$ with norm $1$. Then  $\{f(z^k)\}_{k\in\mathbb{N}}$ is orthogonal in $H_0^2(\mathbb{D})$ if and only if $\mathcal{B}f$
is
 inner.
\end{prop}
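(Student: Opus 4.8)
The plan is to reduce the statement to a computation of $L^2(\mathbb{T}^\infty)$-inner products, using the fact that the Bohr correspondence turns the dilation $z\mapsto z^k$ on $H_0^2(\mathbb{D})$ into multiplication by $\zeta^{\alpha(k)}$ on $H^2(\mathbb{T}^\infty)$. First I would record the key algebraic identity: if $f=\sum_{n\ge 1}a_n z^n$, then $f(z^k)=\sum_{n\ge 1}a_n z^{nk}$, and since $\alpha(nk)=\alpha(n)+\alpha(k)$ (prime factorization is additive under multiplication), we get $\mathcal{B}\big(f(z^k)\big)=\sum_{n\ge1}a_n\zeta^{\alpha(n)+\alpha(k)}=\zeta^{\alpha(k)}\,\mathcal{B}f$. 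So, writing $\eta=\mathcal{B}f$, orthogonality of $\{f(z^k)\}_{k\in\mathbb{N}}$ in $H_0^2(\mathbb{D})$ is equivalent, via the unitary $\mathcal{B}$, to orthogonality of $\{\zeta^{\alpha(k)}\eta\}_{k\in\mathbb{N}}$ in $H^2(\mathbb{T}^\infty)$. (Strictly, $\mathcal{B}$ maps onto $H^2(\mathbb{T}^\infty)$, so this is a genuine unitary equivalence.)

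Next I would translate the orthogonality of the family $\{\zeta^{\alpha(k)}\eta\}_k$ into a statement about the single function $|\eta|^2$. For $k,l\in\mathbb{N}$, $\langle \zeta^{\alpha(k)}\eta,\zeta^{\alpha(l)}\eta\rangle_{L^2(\mathbb{T}^\infty)}=\int_{\mathbb{T}^\infty}\zeta^{\alpha(k)}\overline{\zeta^{\alpha(l)}}\,|\eta|^2\,d\rho$. Now the crucial number-theoretic input is that the monomials $\{\zeta^{\alpha(k)}\overline{\zeta^{\alpha(l)}}:k,l\in\mathbb{N}\}$, as characters on the group $\mathbb{T}^\infty$, run exactly over $\{\zeta^\beta\bar\zeta^\gamma:\beta,\gamma\in\mathbb{Z}_+^{(\infty)},\ \mathrm{supp}\,\beta\cap\mathrm{supp}\,\gamma=\varnothing\}$ — equivalently over all characters $\chi_q$ for $q$ a positive rational — and that the characters that arise from $k=l$ are precisely those with $\beta=\gamma$ after cancellation, i.e. the trivial character. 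Since $\|f\|_{H_0^2}=1$ forces $\|\eta\|_{L^2(\mathbb{T}^\infty)}=1$, the diagonal relations $\langle\zeta^{\alpha(k)}\eta,\zeta^{\alpha(k)}\eta\rangle=\int|\eta|^2=1$ hold automatically. So the system is orthogonal if and only if $\int_{\mathbb{T}^\infty}\chi\,|\eta|^2\,d\rho=0$ for every nontrivial character $\chi$ of the form $\zeta^{\alpha(k)}\overline{\zeta^{\alpha(l)}}$ with $k\ne l$. The point is that every nontrivial character of $\mathbb{T}^\infty$ has this form (given $\beta,\gamma$ with disjoint supports and not both zero, choose $k=\prod p_j^{\beta_j}$, $l=\prod p_j^{\gamma_j}$), so orthogonality is equivalent to all nontrivial Fourier coefficients of $|\eta|^2\in L^1(\mathbb{T}^\infty)$ vanishing, i.e. to $|\eta|^2=1$ a.e., i.e. to $\eta$ being inner.

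The main obstacle, and the step deserving the most care, is the second one: the bookkeeping that identifies the set $\{\alpha(k)-\alpha(l):k,l\in\mathbb{N}\}\subset\mathbb{Z}^{(\infty)}$ with all of $\mathbb{Z}^{(\infty)}$, and the verification that this family is rich enough to separate $L^1$-functions (i.e. that $\int\chi g\,d\rho=0$ for all such $\chi$ forces $g=$const). This is exactly the Bohr-lift mechanism — the multiplicative structure of $\mathbb{N}$ mirrored in the additive structure of $\mathbb{Z}_+^{(\infty)}$ — and it is where one must be precise about which characters occur with $k=l$ (giving the harmless diagonal) versus $k\ne l$. Once that is in place, the equivalence "$|\eta|^2$ has only the trivial Fourier coefficient $\iff |\eta|=1$ a.e." is the standard uniqueness theorem for Fourier coefficients on the compact group $\mathbb{T}^\infty$. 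Since the statement is quoted from \cite[Proposition 5.1]{O}, an alternative and shorter route is simply to invoke Olofsson's result for $\mathcal{H}^2$-inner Dirichlet series together with the unitary identifications $U$, $\mathbf{B}$, $\mathcal{B}$ recorded in the introduction; but the self-contained argument above via $|\mathcal{B}f|^2$ is cleaner and is the one I would present.
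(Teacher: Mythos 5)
Your argument is correct and is essentially the route the paper itself takes: it implicitly reproves Proposition 2.1 via Lemma 2.2 and Proposition 2.3, i.e.\ by observing that $\mathcal{B}(f(z^k))=\zeta^{\alpha(k)}\mathcal{B}f$, that the characters $\zeta^{\alpha(k)}\overline{\zeta^{\alpha(l)}}$ (reduced to coprime pairs via the gcd) exhaust the dual group of $\mathbb{T}^\infty$, and then invoking uniqueness of Fourier coefficients to conclude $|\mathcal{B}f|^2$ is constant. The only cosmetic difference is that you keep general pairs $k\neq l$ where the paper cancels to coprime $i,j$ with $ij>1$; the mechanism is identical.
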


%

\vskip2mm

In what follows we  present some preparatory results, which will be used in the next section.

%
\begin{lem}\label{l1} Suppose $F,G\in L^2(\mathbb{T}^\infty)$. Then $FG\equiv c$ $\text{a.e.}$ for some constant  $c$ if and only if  $$\langle\zeta^{\alpha(i)}F,\zeta^{\alpha(j)}\overline{G}\rangle
_{L^2(\mathbb{T}^\infty)}
=0$$
for any pair  $i$, $j$ of coprime positive integers satisfying $ij>1$. In this case,
$\langle F,\overline{G}\rangle
_{L^2(\mathbb{T}^\infty)}
=c$.
\end{lem}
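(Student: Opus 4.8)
The plan is to work with Fourier expansions on $\mathbb{T}^\infty$ with respect to the orthonormal basis $\{\zeta^\alpha:\alpha\in\mathbb{Z}^{(\infty)}\}$ (extended to all $\alpha\in\mathbb{Z}^{(\infty)}$ for $L^2(\mathbb{T}^\infty)$), and to translate the statement into a statement about the Fourier coefficients of the product $FG$. The key observation is the following dictionary: for a positive integer $n$ with prime factorization $n=p_1^{\alpha_1}\cdots p_m^{\alpha_m}$ we have the multiplicative exponent vector $\alpha(n)$, and the map $n\mapsto\alpha(n)$ is a bijection from $\mathbb{N}$ onto $\mathbb{Z}_+^{(\infty)}$ that carries multiplication of coprime integers to addition of exponent vectors with disjoint supports; moreover $ij>1$ with $\gcd(i,j)=1$ corresponds exactly to pairs of nonzero vectors $\alpha(i),\alpha(j)$ with disjoint supports, not both zero. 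Writing $\widehat{FG}(\gamma)=\langle FG,\zeta^\gamma\rangle$ for $\gamma\in\mathbb{Z}^{(\infty)}$, one computes $\langle\zeta^{\alpha(i)}F,\zeta^{\alpha(j)}\overline{G}\rangle = \langle FG,\zeta^{\alpha(j)-\alpha(i)}\rangle = \widehat{FG}(\alpha(j)-\alpha(i))$. So the hypothesis says precisely that $\widehat{FG}(\alpha(j)-\alpha(i))=0$ whenever $i,j$ are coprime with $ij>1$.

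First I would prove the forward direction, which is trivial: if $FG\equiv c$ then all Fourier coefficients of $FG$ vanish except $\widehat{FG}(0)=c$, and $\alpha(j)-\alpha(i)=0$ forces $\alpha(i)=\alpha(j)$, hence (by disjoint supports and coprimality) $\alpha(i)=\alpha(j)=0$, i.e. $i=j=1$, contradicting $ij>1$; so every inner product in the list vanishes, and taking $i=j=1$ gives $\langle F,\overline G\rangle=\widehat{FG}(0)=c$. The substance is the converse. The point is to show that the set of differences $\{\alpha(j)-\alpha(i): \gcd(i,j)=1,\ ij>1\}$ is exactly $\mathbb{Z}^{(\infty)}\setminus\{0\}$, so that the hypothesis kills every nonzero Fourier coefficient of $FG\in L^1(\mathbb{T}^\infty)$ and hence $FG$ is a.e. constant. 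Given any nonzero $\gamma\in\mathbb{Z}^{(\infty)}$, split it as $\gamma=\beta-\delta$ where $\beta=\gamma_+$ (the positive part) and $\delta=\gamma_-$ (the negative part); these have disjoint supports, are not both zero, and lie in $\mathbb{Z}_+^{(\infty)}$. Let $j$ and $i$ be the positive integers with $\alpha(j)=\beta$, $\alpha(i)=\delta$. Since $\beta,\delta$ have disjoint supports, $\gcd(i,j)=1$; since $\gamma\neq0$ at least one of $i,j$ exceeds $1$, so $ij>1$; and $\alpha(j)-\alpha(i)=\gamma$. Thus $\widehat{FG}(\gamma)=0$ for all $\gamma\neq 0$, so $FG=\widehat{FG}(0)\cdot 1$ a.e., and the constant equals $\langle F,\overline G\rangle$ by the $i=j=1$ case (or directly, $\widehat{FG}(0)=\langle FG,1\rangle = \langle F,\overline G\rangle$).

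The only genuinely delicate point is integrability: $F,G\in L^2$ gives $FG\in L^1(\mathbb{T}^\infty)$, so its Fourier coefficients are well defined and the inner products $\langle\zeta^{\alpha(i)}F,\zeta^{\alpha(j)}\overline G\rangle$ make sense as $\int_{\mathbb{T}^\infty}\zeta^{\alpha(j)-\alpha(i)}\,FG\,d\rho$; one should note that an $L^1$ function on $\mathbb{T}^\infty$ with all nonzero Fourier coefficients zero is a.e. constant (uniqueness of Fourier coefficients, which holds on any compact abelian group). I expect this to be the main obstacle only in the bookkeeping sense — making sure the index translation between coprime integer pairs and disjoint-support vector pairs is stated cleanly — rather than in any analytic depth.
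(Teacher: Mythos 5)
Your proof is correct and follows essentially the same route as the paper's: both arguments reduce the statement to the uniqueness of Fourier coefficients of $FG\in L^1(\mathbb{T}^\infty)$ and the observation that every character $\zeta^{\gamma}$, $\gamma\in\mathbb{Z}^{(\infty)}$, can be written as $\zeta^{\alpha(i)}\overline{\zeta^{\alpha(j)}}$ with $i,j$ coprime (the paper divides a general pair $(l,m)$ by its gcd, which is exactly your positive/negative-part decomposition of $\gamma$). Your write-up is in fact slightly more explicit than the paper's about why the coprime differences exhaust $\mathbb{Z}^{(\infty)}\setminus\{0\}$ and about the $L^1$ uniqueness step, which the paper dismisses as ``easy to see.''
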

\begin{proof} It is easy to see that $FG\equiv c\ \text{a.e.}$ if and only if for any pair  $l$, $m$ of positive integers,
\begin{equation}\label{211}\int_{\mathbb{T}^\infty}\zeta^{\alpha(l)}
\overline{\zeta^{\alpha(m)}}F(\zeta)G(\zeta) \mathrm{d}\rho(\zeta)=c\int_{\mathbb{T}^\infty}\zeta^{\alpha(l)}
\overline{\zeta^{\alpha(m)}} \mathrm{d}\rho(\zeta).\end{equation}
Let $d=(l,m)$ be the greatest common divisor of $l$ and $m$, and put $i=\frac ld$, $j=\frac md$. Then $i$ and $j$ are coprime, and for
$\zeta\in\mathbb{T}^\infty$, $$\zeta^{\alpha(l)}
\overline{\zeta^{\alpha(m)}}=\zeta^{\alpha(i)}\zeta^{\alpha(d)}
\overline{\zeta^{\alpha(j)}\zeta^{\alpha(d)}}=\zeta^{\alpha(i)}
\overline{\zeta^{\alpha(j)}}.$$ Hence
 (\ref{211}) is equivalent to
$$\int_{\mathbb{T}^\infty}\zeta^{\alpha(i)}
\overline{\zeta^{\alpha(j)}}F(\zeta)G(\zeta) \mathrm{d}\rho(\zeta)=c\int_{\mathbb{T}^\infty}\zeta^{\alpha(i)}
\overline{\zeta^{\alpha(j)}} \mathrm{d}\rho(\zeta),$$
that is, $$\langle\zeta^{\alpha(i)}F,\zeta^{\alpha(j)}\overline{G}\rangle
_{L^2(\mathbb{T}^\infty)}
=c\langle\zeta^{\alpha(i)},\zeta^{\alpha(j)}\rangle_{L^2(\mathbb{T}^\infty)}.$$
This completes the proof.
\end{proof}

The following result follows from Lemma \ref{l1} and a direct calculation. By virtue of the Bohr transform $\mathcal{B}$, we find that it is actually a restatement of Proposition \ref{p0}.

\begin{prop}\label{c1} Suppose $F=\sum_{n=1}^\infty a_n\zeta^{\alpha(n)}\in H^2(\mathbb{T}^\infty)$. Then
$|F|\equiv c$ $\text{a.e.}$ for some  $c\geq0$ if and only if
and $$\sum_{n=1}^\infty \overline{a_{ni}}a_{nj}=0$$ for any pair  $i$, $j$ of coprime positive
integers. In this case, $\sum_{n=1}^\infty|a_n|^2=c^2$.
\end{prop}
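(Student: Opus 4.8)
The plan is to deduce Proposition~\ref{c1} from Lemma~\ref{l1} applied with $G=\overline{F}$. Since $F\in H^2(\mathbb{T}^\infty)\subseteq L^2(\mathbb{T}^\infty)$, the function $G=\overline{F}$ again lies in $L^2(\mathbb{T}^\infty)$ and $FG=|F|^2$; because $c\ge 0$, the condition $|F|\equiv c$ a.e.\ is the same as $FG=|F|^2\equiv c^2$ a.e. Thus Lemma~\ref{l1} (with the constant there taken to be $c^2$) says that $|F|\equiv c$ a.e.\ if and only if
\[
\langle \zeta^{\alpha(i)}F,\ \zeta^{\alpha(j)}\overline{G}\rangle_{L^2(\mathbb{T}^\infty)}=0
\]
for every coprime pair $i,j$ with $ij>1$; and since $\overline{G}=F$, the left-hand side is just $\langle \zeta^{\alpha(i)}F,\zeta^{\alpha(j)}F\rangle_{L^2(\mathbb{T}^\infty)}$, and moreover the final clause of Lemma~\ref{l1} will give $\langle F,\overline{G}\rangle=\langle F,F\rangle=c^2$.

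The second step is to compute this inner product in coordinates. Using that prime-exponent vectors add under multiplication, i.e.\ $\zeta^{\alpha(a)}\zeta^{\alpha(b)}=\zeta^{\alpha(ab)}$, one gets $\zeta^{\alpha(i)}F=\sum_{n\ge1}a_n\zeta^{\alpha(ni)}$ and $\zeta^{\alpha(j)}F=\sum_{m\ge1}a_m\zeta^{\alpha(mj)}$, both convergent in $L^2(\mathbb{T}^\infty)$, so by orthonormality of the monomials
\[
\langle \zeta^{\alpha(i)}F,\zeta^{\alpha(j)}F\rangle_{L^2(\mathbb{T}^\infty)}=\sum_{\substack{n,m\ge1\\ ni=mj}}a_n\overline{a_m}.
\]
The elementary number-theoretic fact needed is that, for coprime $i$ and $j$, the equation $ni=mj$ holds precisely when $n=jk$ and $m=ik$ for some positive integer $k$ (from $j\mid ni$ together with $\gcd(i,j)=1$). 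Reindexing by $k$ turns the sum into $\sum_{k\ge1}a_{jk}\overline{a_{ik}}=\sum_{k\ge1}\overline{a_{ik}}a_{jk}$, which is exactly $\sum_{n=1}^\infty\overline{a_{ni}}a_{nj}$. Hence $|F|\equiv c$ a.e.\ if and only if this sum vanishes for every coprime pair with $ij>1$ — equivalently, for all coprime $i\ne j$, since coprimality with $i=j$ forces $i=j=1$. For the norm identity, the last clause of Lemma~\ref{l1} gives $\|F\|^2=\langle F,F\rangle=c^2$, i.e.\ $\sum_{n=1}^\infty|a_n|^2=c^2$ by Parseval (this is also the $i=j=1$ instance of the coefficient computation above).

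I do not expect a serious obstacle here, since the argument is essentially bookkeeping; the two points that deserve care are the reindexing $ni=mj\Leftrightarrow(n,m)=(jk,ik)$ and the justification — via Cauchy--Schwarz and $F\in H^2$ — that one may multiply $F$ by a monomial and expand the inner product term by term. As the sentence preceding the statement hints, one could instead bypass the direct computation altogether: writing $F=\mathcal{B}f$ with $f=\sum a_n z^n\in H_0^2(\mathbb{D})$ and assuming $c>0$ (the case $c=0$ being trivial), the displayed vanishing condition is precisely the assertion that $\{f(z^k)\}_{k\in\mathbb{N}}$ is orthogonal in $H_0^2(\mathbb{D})$, which after normalizing to $\|f\|=1$ is, by Proposition~\ref{p0}, equivalent to $\mathcal{B}f=F$ being (a constant multiple of) an inner function, that is, $|F|$ constant a.e.
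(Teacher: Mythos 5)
Your proof is correct and follows exactly the route the paper intends: the paper gives no written proof, stating only that the proposition ``follows from Lemma \ref{l1} and a direct calculation,'' and your argument (taking $G=\overline{F}$ in Lemma \ref{l1} and expanding $\langle\zeta^{\alpha(i)}F,\zeta^{\alpha(j)}F\rangle$ via the reindexing $ni=mj\Leftrightarrow(n,m)=(jk,ik)$ for coprime $i,j$) is precisely that calculation, with the minor point about excluding the pair $i=j=1$ handled correctly.
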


%
%

For each $\tau=(\tau_1, \tau_2, \cdots)\in [0,1]^\infty$, that is, $0\leq \tau_n\leq1$ for $n\in\mathbb{N}$. Write $\tau\zeta=(\tau_1\zeta_1,\tau_2\zeta_2,\cdots)$ and we define an operator $T_{\tau}$ on $H^2(\mathbb{T}^\infty)$ by
setting $$T_{\tau}(\sum_{n=1}^\infty a_n\zeta^{\alpha(n)})=\sum_{n=1}^\infty a_n(\tau\zeta)^{\alpha(n)}.$$ Then $T_{\tau}$ is a positive diagonal
operator with norm less than $1$.
 For a function $F\in H^2(\mathbb{T}^\infty)$, put $F_{\tau}(\zeta)=(T_{\tau}F)(\zeta)=F(\tau\zeta)$.

\begin{prop}\label{c2} Suppose $F\in H^2(\mathbb{T}^\infty)$ and $\tau\in [0,1]^\infty$. Then  $|F_{\tau}|\equiv c\ \text{a.e.}$ for some $c\geq0$ if and only if
$F_{\tau^2}\overline{F}\equiv c^2\ \text{a.e.}$.
\end{prop}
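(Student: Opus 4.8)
The plan is to reduce the statement, via the Bohr transform $\mathcal{B}$ and Proposition~\ref{p0} (equivalently Proposition~\ref{c1}), to a purely coefficient-theoretic identity and then exploit the multiplicative structure of the monomial indexing $n\mapsto\zeta^{\alpha(n)}$ together with the diagonal action of $T_\tau$. Write $F=\sum_{n=1}^\infty a_n\zeta^{\alpha(n)}$. Because $T_\tau$ is the positive diagonal operator sending $\zeta^{\alpha(n)}$ to $w_n(\tau)\,\zeta^{\alpha(n)}$, where $w_n(\tau)=(\tau\zeta)^{\alpha(n)}/\zeta^{\alpha(n)}=\prod_m \tau_m^{\alpha_m(n)}$ is the product of the prime-power scalings, we have $F_\tau=\sum_n a_n w_n(\tau)\zeta^{\alpha(n)}$ and, crucially, the weights are \emph{completely multiplicative}: $w_{kl}(\tau)=w_k(\tau)w_l(\tau)$ for all $k,l$, and in particular $w_{nj}(\tau)=w_n(\tau)w_j(\tau)$. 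So $F_\tau$ still lies in $H^2(\mathbb{T}^\infty)$ with coefficient sequence $(a_n w_n(\tau))_n$, and $F_{\tau^2}$ has coefficient sequence $(a_n w_n(\tau)^2)_n$ since $w_n(\tau^2)=w_n(\tau)^2$.

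First I would apply Proposition~\ref{c1} to $F_\tau$: $|F_\tau|\equiv c$ a.e.\ if and only if $\sum_{n=1}^\infty \overline{a_{ni}w_{ni}(\tau)}\,a_{nj}w_{nj}(\tau)=0$ for every coprime pair $i,j$, and in that case $\sum_n |a_n|^2 w_n(\tau)^2=c^2$. Using complete multiplicativity of $w$, the left side factors as $\overline{w_i(\tau)}\,w_j(\tau)\sum_{n=1}^\infty \overline{a_{ni}}\,a_{nj}\,w_n(\tau)^2$, so (since $w_i(\tau)$, $w_j(\tau)$ need not vanish — and when they do the statement is vacuous for that pair, which I should note but which causes no trouble because the surviving factor is what matters) the condition $|F_\tau|\equiv c$ a.e.\ is equivalent to $\sum_{n=1}^\infty \overline{a_{ni}}\,a_{nj}\,w_n(\tau)^2=0$ for all coprime $i,j$. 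On the other hand, the function $G:=F_{\tau^2}\overline{F}\in L^2(\mathbb{T}^\infty)$ can be handled by Lemma~\ref{l1} with the roles $F\leftrightarrow F_{\tau^2}$ and $G\leftrightarrow \overline{F}$ (so that ``$FG$'' in the lemma is $F_{\tau^2}\overline{F}$): $F_{\tau^2}\overline{F}\equiv c^2$ a.e.\ iff $\langle \zeta^{\alpha(i)}F_{\tau^2},\zeta^{\alpha(j)}F\rangle=0$ for all coprime $i,j$ with $ij>1$, i.e.\ iff $\sum_{n=1}^\infty \overline{a_{nj}}\,a_{ni}w_{ni}(\tau)^2=0$, and once more complete multiplicativity pulls out $w_i(\tau)^2$ and leaves exactly $\sum_{n=1}^\infty \overline{a_{nj}}\,a_{ni}\,w_n(\tau)^2=0$. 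Comparing, these two coefficient conditions are complex conjugates of each other (swap $i\leftrightarrow j$), hence equivalent, which gives the desired equivalence; and the constant bookkeeping matches because Lemma~\ref{l1} also yields $\langle F_{\tau^2},F\rangle=\sum_n a_n w_n(\tau)^2\overline{a_n}=\sum_n|a_n|^2 w_n(\tau)^2=c^2$, consistent with the $c^2$ coming from Proposition~\ref{c1} applied to $F_\tau$.

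The main obstacle — really the only delicate point — is the bookkeeping around vanishing weights $w_n(\tau)$ (when some $\tau_m=0$): one must check that multiplying/dividing by $w_i(\tau),w_j(\tau)$ is legitimate, and the clean way is to not divide at all but instead observe directly that the two families of orthogonality relations, $\{\sum_n \overline{a_{ni}}a_{nj}w_n(\tau)^2=0\}$ and the raw relations $\{\sum_n \overline{a_{ni}w_{ni}(\tau)}a_{nj}w_{nj}(\tau)=0\}$, generate the same conclusion because $F_{\tau^2}\overline{F}\equiv c^2$ and $|F_\tau|^2\equiv c^2$ are \emph{a priori} statements about the same $L^2$ function $|F_\tau|^2=F_\tau\overline{F_\tau}$ versus $F_{\tau^2}\overline{F}$, and these two $L^2$ functions have identical Fourier coefficients once complete multiplicativity is invoked. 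I would organize the write-up so that I first record $F_\tau,F_{\tau^2}\in H^2(\mathbb{T}^\infty)$ with the stated coefficients, then compute the Fourier coefficient of $F_{\tau^2}\overline{F}$ at a general frequency $\zeta^{\alpha(l)}\overline{\zeta^{\alpha(m)}}$ and likewise for $F_\tau\overline{F_\tau}$, show they agree term by term via $w_{nl}(\tau)=w_n(\tau)w_l(\tau)$, and conclude $F_{\tau^2}\overline{F}=F_\tau\overline{F_\tau}=|F_\tau|^2$ as $L^1$ functions; the proposition is then immediate since $|F_\tau|\equiv c$ a.e.\ $\iff |F_\tau|^2\equiv c^2$ a.e.\ $\iff F_{\tau^2}\overline{F}\equiv c^2$ a.e.
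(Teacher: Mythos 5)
Your first, coefficient-level argument is essentially the paper's proof in disguise: the paper establishes the identity $\tau^{\alpha(ij)}\langle\zeta^{\alpha(i)}F_{\tau},\zeta^{\alpha(j)}F_{\tau}\rangle=\tau^{\alpha(i^2)}\langle \zeta^{\alpha(i)}F_{\tau^2},\zeta^{\alpha(j)}F\rangle$ and appeals to Lemma \ref{l1}, which is exactly your observation that the two families of orthogonality relations differ only by the completely multiplicative weights $\tau^{\alpha(i)},\tau^{\alpha(j)}$. When every $\tau_m>0$ (the only case used later, with $\tau_m=p_m^{-1/2}$) this comparison is legitimate and your argument is complete.

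The problem is the ``clean way'' you propose to actually write it up: the identity $F_{\tau^2}\overline{F}=F_\tau\overline{F_\tau}$ is false, so the whole reorganization collapses. At the frequency $\zeta^{\alpha(l)}\overline{\zeta^{\alpha(m)}}$ with $l,m$ coprime, the Fourier coefficient of $F_{\tau^2}\overline{F}$ is $\tau^{2\alpha(l)}\sum_k a_{lk}\overline{a_{mk}}\tau^{2\alpha(k)}$, while that of $F_\tau\overline{F_\tau}$ is $\tau^{\alpha(l)}\tau^{\alpha(m)}\sum_k a_{lk}\overline{a_{mk}}\tau^{2\alpha(k)}$; the prefactors $\tau^{2\alpha(l)}$ and $\tau^{\alpha(l)}\tau^{\alpha(m)}$ do not agree unless $\tau^{\alpha(l)}=\tau^{\alpha(m)}$. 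Concretely, for $F=1+\zeta_1$ and $\tau=(\tfrac12,1,1,\dots)$ one computes $|F_\tau|^2=\tfrac54+\tfrac12\zeta_1+\tfrac12\overline{\zeta_1}$ but $F_{\tau^2}\overline{F}=\tfrac54+\tfrac14\zeta_1+\overline{\zeta_1}$. What is true, and all the proof needs, is that the two off-diagonal coefficients are \emph{positive multiples of each other} when all $\tau_m>0$, so they vanish simultaneously; the two functions coincide only after you already know both are constant. Relatedly, your instinct about vanishing weights is sound but your dismissal of it (``causes no trouble'') is not: if some $\tau_m=0$ the equivalence genuinely fails and cannot be recovered by bookkeeping. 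For $F=\zeta_1+\zeta_2$ and $\tau=(0,1,1,\dots)$ one has $F_\tau=F_{\tau^2}=\zeta_2$, hence $|F_\tau|\equiv1$, while $F_{\tau^2}\overline{F}=1+\zeta_2\overline{\zeta_1}$ is not a.e.\ constant. (This is in fact a gap in the proposition as stated with $\tau\in[0,1]^\infty$; the paper's own proof tacitly divides by $\tau^{\alpha(ij)}$ as well. The correct hypothesis is $\tau_m>0$ for all $m$, under which your first argument stands and the second should be deleted.)
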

\begin{proof} For any pair  $i$, $j$ of coprime positive integers,
\begin{equation*}
\begin{split}
\tau^{\alpha(ij)}\langle\zeta^{\alpha(i)}F_{\tau},
\zeta^{\alpha(j)}F_{\tau}\rangle_{H^2(\mathbb{T}^\infty)}&=\langle T_{\tau}(\zeta^{\alpha(i)}F),T_{\tau}(\zeta^{\alpha(j)}F)\rangle
_{H^2(\mathbb{T}^\infty)}\\& =\langle T_{\tau}^2(\zeta^{\alpha(i)}F),\zeta^{\alpha(j)}F\rangle
_{H^2(\mathbb{T}^\infty)}\\
&=\tau^{\alpha(i^2)}\langle \zeta^{\alpha(i)}F_{\tau^2},\zeta^{\alpha(j)}F\rangle_{H^2(\mathbb{T}^\infty)}.
\end{split}
\end{equation*}
Thus the proposition follows immediately by Lemma \ref{l1}.
\end{proof}

%

\subsection{Some notions and results from basis theory}
In this subsection, we will introduce some notions from basis theory and list some basic results.

 Suppose that $H$ is a separable Hilbert space.
 A sequence $\{x_k\}_{k\in\mathbb{N}}$ in $H$  is called a Schauder basis for $H$ if to each vector
 $x\in H$ there corresponds a unique sequence of scalars $\{c_k(x)\}_{k\in\mathbb{N}}$ such that
 \begin{equation}\label{basis}
   x=\sum_{k=1}^\infty c_k(x)x_k.
 \end{equation}
 A sequence $\{x_k\}_{k\in\mathbb{N}}$ in $H$  is called a Riesz basis for $H$ if there is an invertible bounded operator
$T\in B(H)$ such that  $\{Tx_k\}_{k\in\mathbb{N}}$ forms an orthonormal basis for $H$.

Let
$H^\infty(\mathbb{T}^\infty)$ denote the set of
bounded  functions in $H^2(\mathbb{T}^\infty)$. Obviously, $H^\infty(\mathbb{T}^\infty)$ with the supreme norm is a Banach algebra.
The result in  \cite{HLS} on Riesz bases formed by dilation system for $L^2(0,1)$  can be translated into the language of the Hardy space as follows.

\begin{thm}[\cite{HLS}]\label{t2} Suppose $f\in H_0^2(\mathbb{D})$. Then  $\{f(z^k)\}_{k\in\mathbb{N}}$ forms an Riesz basis for $H_0^2(\mathbb{D})$ if and only if
$\mathcal{B}f$ is invertible in the algebra
$H^\infty(\mathbb{T}^\infty)$.
\end{thm}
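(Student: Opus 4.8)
The plan is to transport the whole question to $H^2(\mathbb{T}^\infty)$ through the Bohr transform $\mathcal{B}$ and recognize it as a statement about weighted orthonormal systems and multipliers. Since $\alpha(nk)=\alpha(n)+\alpha(k)$ for all $n,k\in\mathbb{N}$, writing $f=\sum_{n\ge1}a_nz^n\in H_0^2(\mathbb{D})$ one gets
\[
\mathcal{B}\bigl(f(z^k)\bigr)=\sum_{n\ge1}a_n\zeta^{\alpha(nk)}=\zeta^{\alpha(k)}\,\mathcal{B}f,\qquad k\in\mathbb{N}.
\]
Because $\mathcal{B}\colon H_0^2(\mathbb{D})\to H^2(\mathbb{T}^\infty)$ is unitary and $n\mapsto\alpha(n)$ is a bijection of $\mathbb{N}$ onto $\mathbb{Z}_+^{(\infty)}$, so that $\{\zeta^{\alpha(k)}\}_{k\in\mathbb{N}}$ is precisely the monomial orthonormal basis of $H^2(\mathbb{T}^\infty)$, the theorem is equivalent to the following: for $g\in H^2(\mathbb{T}^\infty)$, the system $\{g\,\zeta^\alpha\}_{\alpha\in\mathbb{Z}_+^{(\infty)}}$ is a Riesz basis for $H^2(\mathbb{T}^\infty)$ if and only if $g$ is invertible in $H^\infty(\mathbb{T}^\infty)$ (here $g=\mathcal{B}f$). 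Throughout I shall use the standard isometric identification of the multiplier algebra $\mathcal{M}\bigl(H^2(\mathbb{T}^\infty)\bigr)$ with $H^\infty(\mathbb{T}^\infty)$.

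For the sufficiency, if $g$ is invertible in $H^\infty(\mathbb{T}^\infty)$ then the multiplication operator $M_g$ is bounded on $H^2(\mathbb{T}^\infty)$ with bounded two-sided inverse $M_{1/g}$; hence $\{g\,\zeta^\alpha\}=\{M_g\zeta^\alpha\}$ is the image of an orthonormal basis under a bounded invertible operator and is therefore a Riesz basis by definition. Pulling back by $\mathcal{B}^{-1}$ shows $\{f(z^k)\}_{k\in\mathbb{N}}$ is a Riesz basis for $H_0^2(\mathbb{D})$.

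For the necessity, assume $\{g\,\zeta^\alpha\}$ is a Riesz basis. Then there are constants $0<A\le B$, coming from the definition of a Riesz basis, such that
\[
A\,\|p\|^2\le\|g\,p\|^2\le B\,\|p\|^2
\]
for every polynomial $p=\sum_{\alpha}c_\alpha\zeta^\alpha$ (finite sum), because $\|p\|^2=\sum|c_\alpha|^2$ and $g\,p=\sum c_\alpha\,g\zeta^\alpha$. The upper bound, together with density of polynomials in $H^2(\mathbb{T}^\infty)$, shows that $g$ is a bounded multiplier, so $g\in H^\infty(\mathbb{T}^\infty)$ and $M_g$ is bounded on $H^2(\mathbb{T}^\infty)$; the lower bound then extends by density to $\|gh\|\ge\sqrt{A}\,\|h\|$ for all $h\in H^2(\mathbb{T}^\infty)$, so $M_g$ is bounded below, hence injective with closed range. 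A Riesz basis is complete, so $\overline{\operatorname{span}}\{g\,\zeta^\alpha\}=H^2(\mathbb{T}^\infty)$; since this span is contained in the closed subspace $M_g\bigl(H^2(\mathbb{T}^\infty)\bigr)$, the operator $M_g$ is also surjective, hence invertible on $H^2(\mathbb{T}^\infty)$. In particular, with $u:=M_g^{-1}(1)\in H^2(\mathbb{T}^\infty)$ one has $gu=1$ a.e., so $g\ne0$ a.e.; and from $g\cdot M_g^{-1}(\zeta^\alpha)=\zeta^\alpha$ it follows that $M_g^{-1}(\zeta^\alpha)=\zeta^\alpha u$ a.e., so $M_g^{-1}$ restricted to polynomials is multiplication by $u$. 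Therefore $u$ is a bounded multiplier, i.e. $u\in H^\infty(\mathbb{T}^\infty)$, and $gu=1$ exhibits $g$ as invertible in $H^\infty(\mathbb{T}^\infty)$.

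The one ingredient beyond soft operator theory is the isometric identification $\mathcal{M}\bigl(H^2(\mathbb{T}^\infty)\bigr)=H^\infty(\mathbb{T}^\infty)$, whose nontrivial half (every bounded multiplier is essentially bounded) rests on the Poisson-integral correspondence between $H^2(\mathbb{T}^\infty)$ and $H^2(\mathbb{D}_2^\infty)$: testing $M_g^{*}$ against the reproducing kernels of $H^2(\mathbb{D}_2^\infty)$ bounds $g$ on $\mathbb{D}_2^\infty$ by $\|M_g\|$, and passing to boundary values bounds $|g|$ a.e.\ on $\mathbb{T}^\infty$; this is available from \cite{HLS, CG}, and I expect it to be the only delicate step. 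As an alternative one could deduce Theorem \ref{t2} from Theorem \ref{collection thm}(3) together with the observation that ``$\mathcal{B}f$ is bounded from above and below on $\mathbb{D}_2^\infty$'' and ``$\mathcal{B}f$ is invertible in $H^\infty(\mathbb{T}^\infty)$'' single out the same functions, again via passage to boundary values.
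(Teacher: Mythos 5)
Your proof is correct. The paper itself offers no argument for Theorem \ref{t2} --- it is stated as a translation of the Riesz-basis criterion of \cite{HLS} for dilation systems in $L^2(0,1)$ --- and your derivation is essentially the standard one underlying that cited result: the identity $\mathcal{B}(f(z^k))=\zeta^{\alpha(k)}\mathcal{B}f$ together with the bijection $k\mapsto\alpha(k)$ reduces everything to the statement that $\{g\,\zeta^\alpha\}_{\alpha\in\mathbb{Z}_+^{(\infty)}}$ is a Riesz basis iff $M_g$ is invertible, and your soft operator-theoretic argument for that (upper frame bound gives a bounded multiplier, lower bound plus completeness gives invertibility of $M_g$, and $M_g^{-1}$ is again a multiplier) is sound. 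You correctly isolate the one genuinely nontrivial external ingredient, the identification of the multiplier algebra of $H^2(\mathbb{T}^\infty)$ with $H^\infty(\mathbb{T}^\infty)$, and attribute it to \cite{HLS,CG}; with that granted, the proof is complete.
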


Say a Schauder basis $\{x_k\}_{k\in\mathbb{N}}$ is an unconditional basis for $H$ if the series in  (\ref{basis}) converges
unconditionally for each $x\in H$, that is, for each permutation $\sigma$ of $\mathbb{N}$ the series
$$\sum_{k=1}^\infty c_{\sigma(k)}(x)x_{\sigma(k)}$$ converges to $x$.

Finally, a sequence $\{x_k\}_{k\in\mathbb{N}}$ in  $H$  is called a frame for $H$ if there exist constants $A,B>0$ such that
\begin{equation}\label{frame definition}
  A\|x\|^2\leq\sum_{k=1}^{\infty}|\langle x, x_k\rangle|^2\leq B\|x\|^2,\quad x\in H.
\end{equation}
It is clear that every frame is complete, and every Riesz basis is a frame. The special case when $A=B=1$ in (\ref{frame definition}) defines the Parseval frame. That is to say, a Parseval frame for $H$ is a sequence $\{x_k\}_{k\in\mathbb{N}}$ in  $H$ satisfying
$$\|x\|^2=\sum_{k=1}^{\infty}|\langle x, x_k\rangle|^2,\quad x\in H.$$
The following two results will be needed in the sequel, see \cite[Lemma 3.6.9, Theorem 7.1.1]{Ch}.

\begin{lem}\label{uncon} A  Schauder basis $\{x_k\}_{n\in\mathbb{N}}$ for  $H$ is a Riesz basis if and only if
it is an unconditional basis bounded from both above and below, that is, $$0<\inf_{k\in\mathbb{N}}\|x_k\|\leq
\sup_{k\in\mathbb{N}}\|x_k\|<\infty.$$
\end{lem}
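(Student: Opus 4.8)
The plan is to prove the two implications separately; the nontrivial direction (that an unconditional, norm‑bounded Schauder basis is a Riesz basis) will rest on a Rademacher (random sign) averaging argument that is special to Hilbert space. For the easy direction, suppose $\{x_k\}_{k\in\mathbb{N}}$ is a Riesz basis, with $T\in B(H)$ invertible and $\{Tx_k\}_{k\in\mathbb{N}}$ an orthonormal basis. An orthonormal expansion converges unconditionally, and a bounded operator sends an unconditionally convergent series to an unconditionally convergent one with the correspondingly transformed sum; applying $T^{-1}$ then shows $\{x_k\}_{k\in\mathbb{N}}$ is an unconditional basis. The norm bounds come from $\|x_k\|=\|T^{-1}(Tx_k)\|\le\|T^{-1}\|$ and $1=\|Tx_k\|\le\|T\|\,\|x_k\|$, so $\|T\|^{-1}\le\|x_k\|\le\|T^{-1}\|$ for all $k$.

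For the converse, assume $\{x_k\}_{k\in\mathbb{N}}$ is an unconditional Schauder basis with $0<c_0:=\inf_k\|x_k\|$ and $C_0:=\sup_k\|x_k\|<\infty$. First I would invoke the standard consequence of unconditionality together with the uniform boundedness principle: there is a constant $M\ge1$, the unconditional basis constant, with
\begin{equation*}
M^{-1}\Big\|\sum_k a_kx_k\Big\|\le\Big\|\sum_k\varepsilon_k a_kx_k\Big\|\le M\Big\|\sum_k a_kx_k\Big\|
\end{equation*}
for every finitely supported scalar sequence $(a_k)$ and every choice of signs $\varepsilon_k\in\{-1,1\}$ (the left inequality follows from the right one applied to the sequence $(\varepsilon_k a_k)$ with the same signs). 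Averaging the square of the middle term over independent Rademacher signs annihilates the cross terms, giving $\mathbb{E}_\varepsilon\|\sum_k\varepsilon_k a_kx_k\|^2=\sum_k|a_k|^2\|x_k\|^2$, hence
\begin{equation*}
M^{-2}\Big\|\sum_k a_kx_k\Big\|^2\le\sum_k|a_k|^2\|x_k\|^2\le M^2\Big\|\sum_k a_kx_k\Big\|^2 .
\end{equation*}
Combining this with $c_0\le\|x_k\|\le C_0$ yields $(c_0/M)^2\sum_k|a_k|^2\le\|\sum_k a_kx_k\|^2\le(MC_0)^2\sum_k|a_k|^2$. Now fix an orthonormal basis $\{e_k\}_{k\in\mathbb{N}}$ of $H$; the assignment $e_k\mapsto x_k$ extends to a bounded operator $S\in B(H)$ that is bounded below, hence injective with closed range, and its range contains every $x_k$, so by the basis hypothesis its range is dense and $S$ is invertible. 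Taking $T=S^{-1}$ gives $Tx_k=e_k$, so $\{x_k\}_{k\in\mathbb{N}}$ is a Riesz basis.

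The main obstacle is the production of the uniform constant $M$ in the converse: one must show that unconditional convergence of every expansion forces the family of signed partial‑sum operators $x\mapsto\sum_{k\in F}\varepsilon_k c_k(x)x_k$ (over finite $F\subseteq\mathbb{N}$ and signs $\varepsilon_k$) to be uniformly bounded, which follows from the uniform boundedness principle once one knows these operators are well defined and pointwise bounded — and this in turn uses the continuity of the coefficient functionals $c_k$ of a Schauder basis. The remaining ingredients (the averaging identity, and the routine passage from finitely supported sequences to norm limits in the definition of $S$) present no real difficulty, and of course the whole statement is classical, being recorded in \cite[Lemma 3.6.9]{Ch}.
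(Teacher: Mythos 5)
Your proof is correct: both directions are handled properly, and the key technical point in the converse (extracting the uniform unconditional basis constant $M$ via the uniform boundedness principle and then killing the cross terms by Rademacher averaging) is exactly the standard argument. The paper itself offers no proof of this lemma, citing it directly from \cite[Lemma 3.6.9]{Ch}, and your write-up is essentially the argument recorded there, so there is nothing to reconcile.
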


\begin{lem} \label{frame Riesz}
  Let $\{x_k\}_{n\in\mathbb{N}}$ be a frame for $H$. Then the following are equivalent:
  \begin{itemize}
    \item [(1)] $\{x_k\}_{n\in\mathbb{N}}$ is a Riesz basis for $H$;
    \item [(2)] $\{x_k\}_{n\in\mathbb{N}}$ biorthogonal system, that is, there exists a sequence $\{y_k\}_{n\in\mathbb{N}}$ in $H$, such that
        $$\langle x_k,y_l\rangle=\delta_{kl},\quad k,l=1,2,\cdots;$$
    \item [(3)] $\{x_k\}_{n\in\mathbb{N}}$ is $\omega$-independent, that is, whenever
 $\sum_{k=1}^\infty c_kx_k$ converges to $0$ in $H$-norm for some sequence $\{c_k\}_{k\in\mathbb{N}}$ of scalars, then
 necessarily $c_k=0$ for all $k\in\mathbb{N}$.
  \end{itemize}
\end{lem}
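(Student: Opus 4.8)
The plan is to prove the cycle of implications $(1)\Rightarrow(2)\Rightarrow(3)\Rightarrow(1)$, using the standard frame-operator machinery. Throughout, let $A,B$ be frame bounds for $\{x_k\}$, let $C\colon H\to\ell^2$, $Cx=\{\langle x,x_k\rangle\}_k$, denote the analysis operator, and let $T=C^*\colon\ell^2\to H$, $T\{c_k\}=\sum_k c_kx_k$, denote the synthesis operator. The frame inequality (\ref{frame definition}) says precisely that $C$ is bounded and bounded below, hence that $T$ is bounded and surjective; moreover $S:=TC$ is a positive invertible operator with $AI\le S\le BI$, and one has the reconstruction formula $x=\sum_k\langle x,S^{-1}x_k\rangle x_k$, converging in norm, for every $x\in H$.

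For $(1)\Rightarrow(2)$ I would write a Riesz basis as $x_k=Q^{-1}e_k$ with $Q\in B(H)$ invertible and $\{e_k\}$ an orthonormal basis; then $y_k:=Q^*e_k$ satisfies $\langle x_k,y_l\rangle=\langle Q^{-1}e_k,Q^*e_l\rangle=\langle e_k,e_l\rangle=\delta_{kl}$. For $(2)\Rightarrow(3)$, if $\langle x_k,y_l\rangle=\delta_{kl}$ and $\sum_k c_kx_k=0$ in norm, then pairing with $y_l$ and using continuity of the inner product gives $c_l=\langle\sum_k c_kx_k,y_l\rangle=0$ for every $l$. (There is also a direct route to $(3)\Rightarrow(2)$: applying the reconstruction formula to $x=x_j$ yields $\sum_k(\delta_{jk}-\langle x_j,S^{-1}x_k\rangle)x_k=0$, and $\omega$-independence then forces $\{S^{-1}x_k\}$ to be biorthogonal to $\{x_k\}$.)

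The substantive direction is $(3)\Rightarrow(1)$. First I would show that the synthesis operator $T$ is injective: if $\{c_k\}\in\ell^2$ and $T\{c_k\}=0$, then the partial sums $\sum_{k\le n}c_kx_k$ converge in norm to $T\{c_k\}=0$ by boundedness of $T$, so $\omega$-independence forces $c_k=0$ for all $k$. Thus $T$ is a bounded bijection of $\ell^2$ onto $H$, hence a topological isomorphism by the open mapping theorem. Fixing a unitary $V\colon H\to\ell^2$ (possible since $H$ is separable), the operator $Q:=(TV)^{-1}\in B(H)$ is invertible and satisfies $Qx_k=QT\delta_k=V^{-1}\delta_k$, so $\{Qx_k\}$ is the orthonormal basis $\{V^{-1}\delta_k\}$ of $H$; that is, $\{x_k\}$ is a Riesz basis. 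The main obstacle is exactly this step: one must see that $\omega$-independence (a priori a statement about arbitrary norm-convergent series) already yields injectivity of $T$ on $\ell^2$, and then invoke the open mapping theorem to upgrade ``bounded bijection'' to ``topological isomorphism''; the other two implications are essentially formal once the frame apparatus is set up.
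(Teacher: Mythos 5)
Your proof is correct. The paper does not prove this lemma at all --- it simply cites it from Christensen's book (\cite[Lemma 3.6.9, Theorem 7.1.1]{Ch}) --- and your argument is the standard one found there: the only substantive step, $(3)\Rightarrow(1)$, correctly reduces $\omega$-independence to injectivity of the synthesis operator $T$ on $\ell^2$ (via norm-convergence of the partial sums) and then upgrades the resulting bounded bijection to a topological isomorphism, while $(1)\Rightarrow(2)\Rightarrow(3)$ are handled by the routine biorthogonality computations.
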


\section{Main results}
In this section we will prove Theorem \ref{t1} restated below, and
give a complete characterization of  unconditional bases and frames formed by power dilation systems for Drichlet-type spaces.
%
\vskip2mm
\noindent\textbf{Theorem \ref{t1}.} \emph{Suppose $t\neq0$ and $f\in \mathcal{D}_t$. Then  $\{f(z^k)\}_{k\in\mathbb{N}}$ is orthogonal in $\mathcal{D}_t$ if and only if $f(z)=cz^N$ for some constant $c$ and some positive integer $N$.}
\begin{proof}
 Let $$f(z)=\sum_{n=1}^\infty a_nz^n$$ be the Taylor expansion of $f$. With no loss of generality, assume that $f$ is not identically zero. Taking an arbitrary
pair  $i$, $j$ of coprime positive
integers satisfying $ij>1$, by the hypotheses we have
$$0=\langle f(z^{ki}),f(z^{kj})\rangle=\sum_{n=1}^\infty \overline{a_{ni}}a_{nj}(nkij+1)^t.$$
By rearranging the above equality, we see that
\begin{equation}\label{241}\sum_{n=1}^\infty \overline{a_{ni}}a_{nj}(nij+\frac 1k)^t=0.\end{equation}
 The dominated convergence theorem implies that the left side of (\ref{241}) tends to $$\sum_{n=1}^\infty \overline{a_{ni}}a_{nj}(nij)^t$$ as $k\rightarrow \infty$,
and hence
\begin{equation}\label{242}\sum_{n=1}^\infty \overline{a_{ni}}a_{nj}(nij)^t=0.\end{equation}
 Now by subtracting (\ref{242}) from (\ref{241}), and then multiplying  both sides  by $k$, we have
\begin{equation}\label{243}\sum_{n=1}^\infty \overline{a_{ni}}a_{nj}[(nij+\frac 1k)^t-(nij)^t]k=0.\end{equation}
Since $$[(nij+\frac 1k)^t-(nij)^t]k\rightarrow t(nij)^{t-1}\quad(k\rightarrow\infty),$$ again by the dominated convergence theorem, the left side of (\ref{243}) tends to
$$t\cdot\sum_{n=1}^\infty \overline{a_{ni}}a_{nj}(nij)^{t-1}$$ as $k\rightarrow \infty$.  Thus, we have
\begin{equation}\label{244}\sum_{n=1}^\infty \overline{a_{ni}}a_{nj}(nij)^{t-1}=0.\end{equation}

Put $$F=\sum_{n=1}^\infty a_nn^{\frac t2}\zeta^{\alpha(n)}\in H^2(\mathbb{T}^\infty).$$ By Proposition \ref{c1} and (\ref{242}), we have $$|F|^2\equiv\sum_{n=1}^\infty|a_n|^2n^t\quad \text{a.e.}.$$
Similarly, by Proposition \ref{c1} and (\ref{244}),
$$|F_{\tau}|^2\equiv\sum_{n=1}^\infty|a_n|^2n^{t-1}\quad  \text{a.e.},$$
where
$\tau=(\frac{1}{\sqrt{p_1}},\frac{1}{\sqrt{p_2}},\cdots)$ and $p_j\ (j\in\mathbb{N})$ is the $j$-th prime.
Then Corollary \ref{c2} shows that $$F_{\tau^2}\overline{F}\equiv \sum_{n=1}^\infty|a_n|^2n^{t-1}\quad \text{a.e.}.$$
Therefore, $$T_{\tau^2}F=F_{\tau^2}=\frac{F_{\tau^2}\overline{F}}{|F|^2}\cdot F= \frac{\sum_{n=1}^\infty|a_n|^2n^{t-1}}{\sum_{n=1}^\infty|a_n|^2n^t}F,$$
which implies that $F$ is a eigenvector of the diagonal operator $T_{\tau^2}$. It follows that
$F$ is a monomial, and hence $f(z)=a_Nz^N$ for some positive integer $N$.
The proof is complete.
\end{proof}

\vskip2mm

Recall that when $t\neq0$, a power dilation system in $\mathcal{D}_t$ is not
bounded from both above and below, and thus
cannot form a Riesz basis.
In what follows, we will give a complete characterization of  unconditional bases formed by power dilation systems for $\mathcal{D}_t$.

To continue, we need a bit more notations. For $t\in\mathbb{R}$ we
define
\begin{eqnarray*}S_t:\quad \mathcal{D}_t& \rightarrow &
  H^2(\mathbb{D}),\\
  \sum_{n=0}^\infty a_nz^n& \mapsto & \sum_{n=0}^\infty {a_n}n^{\frac t2}z^n.\end{eqnarray*} Then
   each $S_t$ is an invertible bounded operator.
Putting $\mathcal{B}_t=\mathcal{B}S_t\ (t\in\mathbb{R})$, we have the following theorem.

\begin{thm}\label{t0} Suppose $t\in\mathbb{R}$ and $f\in \mathcal{D}_t$. Then  $\{f(z^k)\}_{k\in\mathbb{N}}$ forms an unconditional basis for $\mathcal{D}_t$ if and only if  $\mathcal{B}_t f$ is invertible in
$H^\infty(\mathbb{T}^\infty)$.
\end{thm}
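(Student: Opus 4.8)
The plan is to reduce the statement for $\mathcal{D}_t$ to the already-established Hardy space case (Theorem \ref{t2}) via the intertwining operator $S_t$. The key observation is that $S_t$ does \emph{not} quite commute with the dilation operation $f\mapsto f(z^k)$, but it commutes with it up to the correct rescaling: if $f(z)=\sum_n a_n z^n$ then $f(z^k)=\sum_n a_n z^{nk}$, so $S_t\big(f(z^k)\big)=\sum_n a_n (nk)^{t/2} z^{nk}=k^{t/2}\sum_n a_n n^{t/2} z^{nk}=k^{t/2}\,(S_tf)(z^k)$. Thus, writing $g=S_tf\in H^2_0(\mathbb{D})$, we have $S_t\big(f(z^k)\big)=k^{t/2}g(z^k)$ for every $k\in\mathbb{N}$.

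Since $S_t$ is an invertible bounded operator on the relevant spaces, $\{f(z^k)\}_{k\in\mathbb{N}}$ is an unconditional basis for $\mathcal{D}_t$ if and only if $\{S_t(f(z^k))\}_{k\in\mathbb{N}}=\{k^{t/2}g(z^k)\}_{k\in\mathbb{N}}$ is an unconditional basis for $H^2_0(\mathbb{D})$ (bounded invertible operators preserve Schauder bases and the unconditional convergence of the basis expansions). Now I use the normalization lemma for unconditional bases, Lemma \ref{uncon}: a Schauder basis is a Riesz basis precisely when it is unconditional and norm-bounded above and below. The sequence $\{k^{t/2}g(z^k)\}_k$ differs from $\{g(z^k)\}_k$ only by the scalar weights $k^{t/2}$; I should argue that rescaling each vector of a sequence by a nonzero scalar preserves the property of being an unconditional basis (this is standard — unconditionality is insensitive to individual scalings, and the uniqueness-of-expansion/Schauder property is also preserved since the coefficient functionals just get rescaled reciprocally). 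Hence $\{k^{t/2}g(z^k)\}_k$ is an unconditional basis for $H^2_0(\mathbb{D})$ iff $\{g(z^k)\}_k$ is. So the whole question collapses to: $\{g(z^k)\}_{k\in\mathbb{N}}$ is an unconditional basis for $H^2_0(\mathbb{D})$ iff $\mathcal{B}g=\mathcal{B}S_tf=\mathcal{B}_tf$ is invertible in $H^\infty(\mathbb{T}^\infty)$.

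It remains to prove this last equivalence for the Hardy space, which is the genuine mathematical content not already quoted. One direction is easy: if $\mathcal{B}g$ is invertible in $H^\infty(\mathbb{T}^\infty)$, then by Theorem \ref{t2}, $\{g(z^k)\}_k$ is even a Riesz basis, hence an unconditional basis. For the converse, suppose $\{g(z^k)\}_k$ is an unconditional basis. Because $\{g(z^k)\}_k$ is in particular a Schauder basis and $\{z^k\}_{k\in\mathbb{N}}$ is the standard orthonormal basis, the multiplication-by-$g$ type structure — concretely, the operator $z^k\mapsto g(z^k)$, which under the Bohr transform $\mathcal{B}$ becomes multiplication by $\mathcal{B}g$ on $H^2(\mathbb{T}^\infty)$ — is a bounded bijection of $H^2(\mathbb{T}^\infty)$. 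Here one uses that $g(z^k)=\mathcal{B}^{-1}\big(\zeta^{\alpha(k)}\,\mathcal{B}g\big)$, so the synthesis operator of the system $\{g(z^k)\}$ is unitarily equivalent to the analytic Toeplitz (multiplier) operator $M_{\mathcal{B}g}$ on $H^2(\mathbb{T}^\infty)$. An unconditional basis that is the image of an orthonormal basis under a bounded operator forces that operator to be bounded below as well (since the basis constants of an unconditional basis control the synthesis operator from both sides once one also knows boundedness above), so $M_{\mathcal{B}g}$ is bounded below; a bounded-below analytic multiplier on $H^2(\mathbb{T}^\infty)$ has a bounded inverse multiplier, i.e. $1/\mathcal{B}g\in H^\infty(\mathbb{T}^\infty)$, which is exactly invertibility of $\mathcal{B}g$ in $H^\infty(\mathbb{T}^\infty)$.

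\textbf{Main obstacle.} The delicate point is the converse direction: extracting invertibility in $H^\infty(\mathbb{T}^\infty)$ from mere unconditionality, given that an unconditional basis need \emph{not} be norm-bounded. The subtlety is that $M_{\mathcal{B}g}$ sends the orthonormal basis $\{\zeta^{\alpha(k)}\}$ to an unconditional basis, but this alone does not immediately give that $M_{\mathcal{B}g}$ is bounded \emph{above} (that part is automatic here since $g\in H^2_0(\mathbb{D})$ makes $\mathcal{B}g\in H^2(\mathbb{T}^\infty)$, and being a Schauder basis forces the synthesis map to be a well-defined bounded operator by a uniform-boundedness/closed-graph argument), nor that it is bounded \emph{below} over all of $H^2(\mathbb{T}^\infty)$ rather than just the image being dense. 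I would handle this by invoking the characterization that a bounded operator maps an orthonormal basis to an unconditional basis if and only if it is a bounded, bounded-below bijection (equivalently invertible) — i.e., the image of an orthonormal basis under $T\in B(H)$ is an unconditional basis iff $T$ is invertible — and then translating "invertible multiplier" into "invertible in $H^\infty$" exactly as Theorem \ref{t2} does for Riesz bases. I would double-check that the $k^{t/2}$ rescaling argument is airtight by noting that for $t\ne 0$ the system $\{f(z^k)\}$ in $\mathcal{D}_t$ itself is never norm-bounded (consistent with the earlier remark), yet can still be an unconditional basis, whereas its image in $H^2_0(\mathbb{D})$ after applying $S_t$ is norm-bounded above and below precisely when $\mathcal{B}_tf$ is invertible — so the asymmetry between $\mathcal{D}_t$ and $H^2_0(\mathbb{D})$ is entirely absorbed into the weights $n^{t/2}$ and $k^{t/2}$.
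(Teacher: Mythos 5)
Your overall route is the paper's: intertwine the dilation operators with $S_t$ via $S_t\big(f(z^k)\big)=k^{t/2}(S_tf)(z^k)$, use the invertibility of $S_t$ together with the invariance of the unconditional-basis property under nonzero scalar rescaling to reduce everything to the question of when $\{g(z^k)\}_{k\in\mathbb{N}}$, $g=S_tf$, is an unconditional basis for $H_0^2(\mathbb{D})$, and then appeal to Theorem \ref{t2}. That reduction is sound. The flaw is in how you close the Hardy-space step. The principle you invoke --- that the image of an orthonormal basis under a bounded operator $T$ is an unconditional basis if and only if $T$ is invertible --- is false. Take $T$ diagonal on an orthonormal basis $\{e_k\}$ with $Te_k=k^{-1}e_k$: then $\{k^{-1}e_k\}$ is an unconditional basis (any rescaling of an orthonormal basis by nonzero scalars is), yet $T$ is not bounded below. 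Unconditionality alone never forces the synthesis operator to be bounded below; that is precisely why Lemma \ref{uncon} requires, in addition, that the basis be norm-bounded from both above and below before concluding it is a Riesz basis. So your derivation that $M_{\mathcal{B}g}$ is bounded below from unconditionality of $\{g(z^k)\}_k$ has a genuine gap as written.

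The repair is immediate and is exactly what the paper uses: since each $C_k:h\mapsto h(z^k)$ is an isometry on $H^2(\mathbb{D})$, one has $\|g(z^k)\|_{H^2}=\|g\|_{H^2}$ for every $k$, so the system $\{g(z^k)\}_k$ is automatically norm-bounded above and below. Lemma \ref{uncon} then upgrades ``unconditional basis'' to ``Riesz basis,'' and Theorem \ref{t2} converts this into invertibility of $\mathcal{B}g=\mathcal{B}_tf$ in $H^\infty(\mathbb{T}^\infty)$; no separate analysis of the multiplier $M_{\mathcal{B}g}$ is needed. Note also that your closing sanity check is off: $\|k^{t/2}g(z^k)\|_{H^2}=k^{t/2}\|g\|_{H^2}$ is norm-bounded above and below only when $t=0$, not ``precisely when $\mathcal{B}_tf$ is invertible''; the norm-bounded system is $\{g(z^k)\}_k$ itself, and it is bounded for every nonzero $g$ irrespective of invertibility. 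With these corrections your argument coincides in substance with the paper's proof, which organizes the same steps as a chain of four equivalent conditions and handles the scalar weights via the Riesz-basis norm bounds.
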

\begin{proof} For each $k\in\mathbb{N}$, let  $C_k$ be the $k$-power dilation operator on $\mathrm{Hol}(\mathbb{D})$:
$$C_kf(z)=f(z^k),\quad f\in\mathrm{Hol}(\mathbb{D}).$$ A direct computation gives
\begin{equation}\label{e1}
                      S_tC_k=k^{\frac t2}C_kS_t,\quad k=1, 2, \cdots
\end{equation} on $\mathcal{D}_t$.
In what follows, we will show the following four conditions
are equivalent, which would complete the proof.
\begin{itemize}
  \item [(1)] $\{f(z^k)\}_{k\in\mathbb{N}}$ forms an unconditional basis for $\mathcal{D}_t$;
  \item [(2)] there exists some sequence $\{\lambda_k\}_{k\in\mathbb{N}}$ of complex numbers  (which may depend on $t$), such that $\{\lambda_kS_tC_kf\}_{k\in\mathbb{N}}$ forms a  Riesz basis for $H_0^2(\mathbb{D})$;
  \item [(3)] $\{k^{-\frac t2}S_tC_kf\}_{k\in\mathbb{N}}$ forms a  Riesz basis for $H_0^2(\mathbb{D})$;
  \item [(4)] $\mathcal{B}_t f$  is invertible in
$H^\infty(\mathbb{T}^\infty)$.
\end{itemize}

\noindent(1)$\Leftrightarrow$(2): By the invertibility of $S_t$, (2) is equivalent to the following:
\begin{itemize}
  \item [(2')] there exists some sequence of complex number $\{\lambda_k\}_{k\in\mathbb{N}}$, such that $\{\lambda_kf(z^k)\}_{k\in\mathbb{N}}$ forms a Riesz
basis for $\mathcal{D}_t$.
\end{itemize}
The equivalence of (1) and (2') is a direct consequence of Lemma \ref{uncon}, and then implies the equivalence of (1) and (2).
\vskip2mm

\noindent(3)$\Leftrightarrow$(4): By (\ref{e1}),
$$k^{-\frac t2}(S_tC_kf)(z)=(C_kS_tf)(z)=(S_tf)(z^k)$$
for each $k\in\mathbb{N}$.
The equivalence of (3) and (4) follows immediately from Theorem \ref{t2}.
\vskip2mm

\noindent(3)$\Rightarrow$(2): Obvious.
\vskip2mm

\noindent(2)$\Rightarrow$(3): Assume that $\{\lambda_kS_tC_kf\}_{k\in\mathbb{N}}$ forms a  Riesz basis for some sequence $\{\lambda_k\}_{k\in\mathbb{N}}$ of complex numbers. Again by (\ref{e1}), we have \begin{equation*}
\begin{split}
   \|\lambda_kS_tC_kf\|_{H^2(\mathbb{D})} &=|\lambda_kk^{\frac t2}|\cdot\|C_k S_tf\|_{H^2(\mathbb{D})} \\
     &=|\lambda_kk^{\frac t2}|\cdot\|S_tf\|_{H^2(\mathbb{D})},\quad k=1, 2, \cdots,
\end{split}
\end{equation*} since  each $C_k$ is an isometry on $H^2(\mathbb{D})$. This implies that
 $$0<\inf_{k\in\mathbb{N}}|\lambda_kk^{\frac t2}|\leq
\sup_{k\in\mathbb{N}}|\lambda_kk^{\frac t2}|<\infty,$$
and hence  $$0<\inf_{k\in\mathbb{N}}|\lambda_k^{-1}k^{-\frac t2}|\leq
\sup_{k\in\mathbb{N}}|\lambda_k^{-1}k^{-\frac t2}|<\infty.$$
By the equality $k^{-\frac t2}S_tC_kf=(\lambda_k^{-1}k^{-\frac t2})\,(\lambda_k S_tC_kf)$ and  the above inequality, we see  that the new sequence $\{k^{-\frac t2}S_tC_kf\}_{k\in\mathbb{N}}$  forms a Riesz basis for $H_0^2(\mathbb{D})$.
\end{proof}

Now we  consider the problem when does a power dilation system form a frame.
It was mentioned in \cite[Section 5]{HLS} that every complete power dilation system in $H_0^2(\mathbb{D})$ has a biorthogonal system.
Therefore Lemma \ref{frame Riesz} implies that
every frame formed by a power dilation system for $H_0^2(\mathbb{D})$ is a Riesz basis. This can also be deduced by combining Lemma \ref{frame Riesz} with the following result.
 \begin{lem}\label{independent}
   Suppose that $t\in\mathbb{R}$ and $f(z)=\sum_{n=1}^\infty a_nz^n\in\mathcal{D}_t$. If $a_1\neq0$ we have that $\{f(z^k)\}_{k\in\mathbb{N}}$ is $\omega$-independent in $\mathcal{D}_t$.
 \end{lem}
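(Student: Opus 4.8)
The plan is to show that if $\sum_{k=1}^\infty c_k f(z^k)$ converges to $0$ in $\mathcal{D}_t$, then all $c_k$ vanish, proceeding by strong induction on the index. First I would record that $f(z^k) = \sum_{n=1}^\infty a_n z^{nk}$, so the coefficient of $z^m$ in the formal sum $\sum_{k=1}^\infty c_k f(z^k)$ is $\sum_{k \mid m} c_k\, a_{m/k}$; convergence in $\mathcal{D}_t$ implies in particular coefficient-wise convergence (evaluation of the $m$-th Taylor coefficient is a bounded functional on $\mathcal{D}_t$), so each such coefficient sum is $0$. Actually a small point to address: the inner sum $\sum_{k\mid m} c_k a_{m/k}$ is finite, but I should confirm that the $m$-th coefficient of the limit is genuinely $\sum_{k\mid m} c_k a_{m/k}$ rather than an infinite rearrangement — this follows because for fixed $m$ only the finitely many $k$ dividing $m$ contribute to the $z^m$-coefficient of each partial sum, and coefficient functionals are continuous.

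Granting that, the key identities are $\sum_{k \mid m} c_k a_{m/k} = 0$ for every $m \ge 1$. Taking $m = 1$ gives $c_1 a_1 = 0$, hence $c_1 = 0$ since $a_1 \neq 0$. Now induct: suppose $c_1 = \dots = c_{r-1} = 0$. Apply the identity with $m = r$: the divisors $k$ of $r$ with $k < r$ contribute $c_k a_{r/k} = 0$ by the induction hypothesis, leaving only the term $k = r$, namely $c_r a_1 = 0$, so $c_r = 0$. This completes the induction and shows $\{f(z^k)\}_{k\in\mathbb{N}}$ is $\omega$-independent.

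I do not anticipate a serious obstacle here; the only point requiring a little care is the justification that convergence of the series in $\mathcal{D}_t$ transfers to convergence of individual Taylor coefficients and that the $m$-th coefficient of the sum equals the finite sum $\sum_{k\mid m} c_k a_{m/k}$. This is handled by noting that for each $n$, the map $g = \sum_j b_j z^j \mapsto b_n$ is a bounded linear functional on $\mathcal{D}_t$ (indeed $|b_n| \le (n+1)^{-t/2}\|g\|_t$), so it commutes with the $\mathcal{D}_t$-limit; and for the partial sums $\sum_{k=1}^{K} c_k f(z^k)$ with $K \ge m$, the coefficient of $z^m$ is already the stable value $\sum_{k \mid m} c_k a_{m/k}$. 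Then the induction above finishes the argument cleanly.
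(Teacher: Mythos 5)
Your proof is correct and is essentially the same as the paper's: the paper extracts the $m$-th Taylor coefficient by pairing the partial sums with $z^m$ in the $\mathcal{D}_t$ inner product, obtaining $\|z^m\|_{\mathcal{D}_t}^2\sum_{k\mid m}c_k a_{m/k}=0$, and then runs the same induction starting from $c_1a_1=0$. Your extra care about coefficient functionals being bounded is exactly the justification implicit in the paper's limit computation.
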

 \begin{proof}
   Assume that $a_1\neq0$ and
   $\sum_{k=1}^\infty c_kf(z^k)$ converges to the zero function in $\mathcal{D}_t$-norm for some sequence $\{c_k\}_{k\in\mathbb{N}}$ of scalars. Then for each $n\in\mathbb{N}$,
   $$0=\lim_{N\rightarrow\infty}\langle \sum_{k=1}^{N} c_kf(z^k), z^n\rangle_{\mathcal{D}_t}
   =\lim_{N\rightarrow\infty}\sum_{k=1}^{N}c_k\langle f(z^k), z^n\rangle_{\mathcal{D}_t}
   =\|z^n\|_{\mathcal{D}_t}^2\cdot\sum_{k|n}c_k\,a_{\frac{n}{k}}.$$
   Considering $n=1$ in the above identity, one easily see that $c_1=0$. By the induction, we come to the desired conclusion.
 \end{proof}

More generally, we obtain a complete characterization of frames formed by power dilation systems for Dirichlet-type spaces, which is summarized as follows.
\begin{thm} \label{t3}
  Suppose $t\in\mathbb{R}$ and $f\in\mathcal{D}_t$. Then
  \begin{itemize}
    \item [(1)] if $t=0$, $\{f(z^k)\}_{k\in\mathbb{N}}$ is a frame for $H_0^2(\mathbb{D})$ if and only if $\{f(z^k)\}_{k\in\mathbb{N}}$ is a Riesz basis for $H_0^2(\mathbb{D})$, if and only if
$\mathcal{B}f$ is invertible in the algebra
$H^\infty(\mathbb{T}^\infty)$.
    \item [(2)] if $t\neq0$, $\{f(z^k)\}_{k\in\mathbb{N}}$ never forms a frame for $\mathcal{D}_t$.
  \end{itemize}
\end{thm}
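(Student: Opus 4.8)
The plan is to handle the two parts separately, leaning on the machinery already assembled.

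\medskip

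\textbf{Part (1).} Assume $t=0$, so $\mathcal{D}_0 = H_0^2(\mathbb{D})$. A Riesz basis is always a frame, so only one direction needs work: suppose $\{f(z^k)\}_{k\in\mathbb{N}}$ is a frame. First I would note that $f$ cannot vanish to infinite order at $0$ and, more to the point, that the Taylor coefficient $a_1$ of $f$ must be nonzero. Indeed, if $a_1 = 0$ then every $f(z^k)$ lies in $\overline{\mathrm{span}}\{z^2, z^3, \dots\}$, a proper closed subspace of $H_0^2(\mathbb{D})$, contradicting completeness (which frames enjoy); so $a_1\neq 0$. Then Lemma \ref{independent} with $t=0$ gives that $\{f(z^k)\}_{k\in\mathbb{N}}$ is $\omega$-independent, and Lemma \ref{frame Riesz} (condition (3) $\Rightarrow$ (1)) upgrades the frame to a Riesz basis. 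Finally, Theorem \ref{t2} (equivalently Theorem \ref{collection thm}(3)) identifies Riesz bases with invertibility of $\mathcal{B}f$ in $H^\infty(\mathbb{T}^\infty)$, closing the chain of equivalences.

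\medskip

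\textbf{Part (2).} Assume $t\neq 0$ and, toward a contradiction, suppose $\{f(z^k)\}_{k\in\mathbb{N}}$ is a frame for $\mathcal{D}_t$; in particular it is complete, so as above $a_1\neq 0$. The key observation is a scaling obstruction already flagged in the paper: $\|z^k\|_t^2 = (k+1)^t$, and more generally $\|f(z^k)\|_t^2 = \sum_{n=1}^\infty |a_n|^2 (nk+1)^t$. When $t>0$ this tends to $\infty$ as $k\to\infty$ (the $n=1$ term alone blows up), and when $t<0$ it tends to $0$; either way $\{\|f(z^k)\|_t\}_{k}$ is not bounded away from $0$ and $\infty$ simultaneously. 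Now I would invoke the standard fact that every frame is norm-bounded above and below: from $A\|x\|^2 \le \sum_k |\langle x,x_k\rangle|^2 \le B\|x\|^2$, taking $x = x_j$ gives $\|x_j\|^4 \le \sum_k |\langle x_j,x_k\rangle|^2 \le B\|x_j\|^2$, hence $\|x_j\|^2\le B$; and $\omega$-independence (Lemma \ref{independent}, available since $a_1\neq0$) together with Lemma \ref{frame Riesz} would otherwise force a Riesz basis, which is also norm-bounded below — alternatively one argues the lower bound directly. This contradicts the unboundedness of $\|f(z^k)\|_t$, so no such frame exists.

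\medskip

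The main obstacle is making the ``frame $\Rightarrow$ norm-bounded'' step clean in the $t<0$ case, where the issue is the lower bound $\inf_k \|f(z^k)\|_t > 0$ failing rather than the upper bound. The cleanest route is: a frame is complete, completeness forces $a_1\neq 0$, Lemma \ref{independent} then gives $\omega$-independence, Lemma \ref{frame Riesz}(3)$\Rightarrow$(1) promotes the frame to a Riesz basis, and a Riesz basis is by Lemma \ref{uncon} bounded both above and below — which directly contradicts $\|f(z^k)\|_t^2 = \sum_n |a_n|^2(nk+1)^t \to 0$ (for $t<0$) or $\to\infty$ (for $t>0$) as $k\to\infty$. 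So in fact Part (2) reduces entirely to the norm computation plus the cited lemmas, and the only genuinely new input is the elementary limit behavior of $\|f(z^k)\|_t^2$, which follows from the dominated convergence theorem exactly as in the proof of Theorem \ref{t1}.
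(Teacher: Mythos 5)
Your proof is correct. For part (1) it is essentially the paper's argument: the paper upgrades a frame to a Riesz basis via Lemma \ref{frame Riesz}, either by quoting the existence of a biorthogonal system for complete power dilation systems (from \cite{HLS}) or, as you do, via $\omega$-independence from Lemma \ref{independent}; your explicit check that completeness forces $a_1\neq 0$ (so that Lemma \ref{independent} actually applies) is a detail the paper leaves implicit and is worth having. For part (2) your route differs from the proof the paper writes out. The paper argues directly from the frame inequality: the Bessel bound gives weak convergence of $f(z^k)$ to $0$, hence norm-boundedness, which already rules out $t>0$; for $t<0$ it tests the frame inequality against $h=z^{p^n}$ for a prime $p>(\tfrac{A}{2B})^{1/t}$ and a telescoping estimate on $\sum_i|a_{p^i}|^2$ forces $\|f\|_{\mathcal{D}_t}=\infty$. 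You instead reuse the part (1) machinery: frame $\Rightarrow$ complete $\Rightarrow a_1\neq0$ $\Rightarrow$ $\omega$-independent $\Rightarrow$ Riesz basis, which by Lemma \ref{uncon} must be norm-bounded above and below, contradicting $\|f(z^k)\|_t^2=\sum_n|a_n|^2(nk+1)^t\to\infty$ (for $t>0$) or $\to 0$ (for $t<0$, by dominated convergence). This soft argument is exactly what the paper gestures at when it remarks beforehand that for $t\neq0$ no power dilation system is norm-bounded both ways and hence none is a Riesz basis; the paper's computation is presented as a self-contained alternative that bypasses Lemmas \ref{independent} and \ref{frame Riesz}. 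Both are valid; yours is shorter, while the paper's direct proof has the merit of not depending on the frame-to-Riesz-basis upgrade.
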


Indeed, Theorem \ref{t3} (2) can be proved directly by using the definition of frames. For this, assume conversely
there  exist $f(z)=\sum_{n=1}^\infty a_nz^n\in\mathcal{D}_t\ (t\neq0)$ and  constants $A,B>0$, such that \begin{equation}\label{frame}
  A\|h\|_{\mathcal{D}_t}^2\leq\sum_{k=1}^{\infty}|\langle h, f(z^k)\rangle_{\mathcal{D}_t}|^2\leq B\|h\|_{\mathcal{D}_t}^2,\quad h\in \mathcal{D}_t.
\end{equation}
It is clear that $\{f(z^k)\}_{k\in\mathbb{N}}$ converges weakly to $0$ in $\mathcal{D}_t$, and then is  uniformly norm-bounded. This implies that $t$ is necessarily less than $0$. By choosing a prime $p>(\frac{A}{2B})^{\frac{1}{t}}$ and substituting $h=z^{p^n}\ (n\in\mathbb{N})$ in (\ref{frame}), we have
$$A(p^n+1)^{-t}\leq\sum_{i=1}^{n}|a_{p^i}|^2\leq B(p^n+1)^{-t},\quad n=1,2,\cdots.$$
Then there exists $N\in\mathbb{N}$ such that for each $n\geq N$,
$$Ap^{-nt}\leq\sum_{i=1}^{n}|a_{p^i}|^2\leq 2Bp^{-nt},$$
which yields that
$$|a_{p^n}|^2\geq Ap^{-nt}-2Bp^{-(n-1)t},\quad n=N+1,N+2,\cdots.$$
Therefore, $$\|f\|_{\mathcal{D}_t}^2
\geq \sum_{n=N+1}^{\infty}|a_{p^n}|^2(p^n+1)^t
\geq2^t \sum_{n=N+1}^{\infty}|a_{p^n}|^2p^{nt}\geq2^t \sum_{n=N+1}^{\infty}(A-2Bp^t)=\infty,$$ which is a contradiction.
\vskip2mm

Since a Parseval frame  is an orthonormal  basis  if and only if it is also a Riesz basis, Theorem \ref{t3} (1) gives the following.

\begin{cor}
  Suppose  $f\in H_0^2(\mathbb{D})$. Then $\{f(z^k)\}_{k\in\mathbb{N}}$ is a Parseval frame for $H_0^2(\mathbb{D})$ if and only if $\{f(z^k)\}_{k\in\mathbb{N}}$ is an orthonormal basis for $H_0^2(\mathbb{D})$, if and only if
$\mathcal{B}f$ is inner in $H^2(\mathbb{T}^\infty)$.
\end{cor}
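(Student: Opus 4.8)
The plan is to derive the claimed three-way equivalence by combining a general frame-theoretic principle with the two characterizations already in hand, namely Theorem \ref{t3}(1) and Proposition \ref{p0}. I would organize everything around the cycle: Parseval frame $\Rightarrow$ orthonormal basis $\Rightarrow$ $\mathcal{B}f$ inner $\Rightarrow$ orthonormal basis, together with the trivial remark that every orthonormal basis is a Parseval frame. The first equivalence I would settle using the fact recalled just before the statement, that a Parseval frame which is simultaneously a Riesz basis must be an orthonormal basis: writing a Riesz basis as $\{Ue_k\}$ for an invertible $U$ and an orthonormal basis $\{e_k\}$, the Parseval condition forces the frame operator $UU^\ast$ to be the identity, so $U$ is unitary and $\{Ue_k\}$ is itself orthonormal and complete. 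Granting this, if $\{f(z^k)\}_{k\in\mathbb{N}}$ is a Parseval frame then it is in particular a frame, so the case $t=0$ of Theorem \ref{t3}(1) shows it is a Riesz basis (equivalently, that $\mathcal{B}f$ is invertible in $H^\infty(\mathbb{T}^\infty)$), and the principle just recalled upgrades it to an orthonormal basis. Since an orthonormal basis is trivially a Parseval frame, conditions (1) and (2) coincide.

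Next I would connect these to the inner condition through Proposition \ref{p0}. If $\{f(z^k)\}_{k\in\mathbb{N}}$ is an orthonormal basis, then in particular $\|f\|=\|f(z^1)\|=1$, because each dilation operator $C_k$ is an isometry on $H^2(\mathbb{D})$, and the system is orthogonal; hence Proposition \ref{p0} gives that $\mathcal{B}f$ is inner. For the converse I would start from $\mathcal{B}f$ inner: since $\mathcal{B}$ is unitary from $H_0^2(\mathbb{D})$ onto $H^2(\mathbb{T}^\infty)$ and an inner function has $L^2(\mathbb{T}^\infty)$-norm $1$, we again obtain $\|f\|=1$, and Proposition \ref{p0} returns that $\{f(z^k)\}_{k\in\mathbb{N}}$ is an orthonormal system. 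It then remains to promote this orthonormal system to an orthonormal basis, that is, to establish completeness, which closes the cycle back to (1).

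This completeness step is exactly where I expect the main difficulty to lie. Under the Bohr transform, the closed linear span of $\{f(z^k)\}_{k\in\mathbb{N}}$ corresponds to the closed span of $\{\zeta^{\alpha(k)}\,\mathcal{B}f:k\in\mathbb{N}\}$ in $H^2(\mathbb{T}^\infty)$, and since $\{\zeta^{\alpha(k)}:k\in\mathbb{N}\}$ exhausts the monomials $\{\zeta^\alpha\}$ via prime factorization, completeness of the dilation system is equivalent, by Theorem \ref{collection thm}(2), to $\mathcal{B}f$ being cyclic. Thus the crux of the converse is to verify that an inner $\mathcal{B}f$ is cyclic; I would approach this by analysing the action of multiplication by the monomials $\zeta^{\alpha(k)}$ on $\mathcal{B}f$ and arguing that the invariant subspace they generate fills out $H^2(\mathbb{T}^\infty)$. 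Verifying this cyclicity is the delicate point on which the whole equivalence hinges, and it is the step I would expect to require the most care, since it is what distinguishes a mere orthonormal system from a genuine orthonormal basis and thereby links the inner condition back to the Parseval-frame and Riesz-basis conditions.
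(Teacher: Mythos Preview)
Your argument for the equivalence of the first two conditions (Parseval frame $\Leftrightarrow$ orthonormal basis) is exactly the paper's: invoke Theorem~\ref{t3}(1) to upgrade a frame to a Riesz basis, and then use the general fact that a Parseval frame which is also a Riesz basis must be an orthonormal basis. Likewise, your deduction that an orthonormal basis forces $\mathcal{B}f$ to be inner via Proposition~\ref{p0} is correct.

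The genuine gap is the step you flag as ``the crux'': showing that an inner $\mathcal{B}f$ is cyclic. This is not merely delicate; it is false. Take $f(z)=z^2$, so that $\mathcal{B}f=\zeta_1$, which is inner on $\mathbb{T}^\infty$; yet $\{f(z^k)\}_{k\in\mathbb{N}}=\{z^{2k}\}_{k\in\mathbb{N}}$ misses every odd power of $z$ and hence is not complete in $H_0^2(\mathbb{D})$. More generally, for inner $\eta$ the closed invariant subspace generated by $\{\zeta^{\alpha(k)}\eta\}_{k\in\mathbb{N}}$ is $\eta\,H^2(\mathbb{T}^\infty)$, and this equals $H^2(\mathbb{T}^\infty)$ only when $\bar\eta\in H^2(\mathbb{T}^\infty)$, i.e.\ only when $\eta$ is a unimodular constant. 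So the implication ``$\mathcal{B}f$ inner $\Rightarrow$ orthonormal basis'' cannot be salvaged, and your plan to close the cycle through cyclicity will fail. Note that the paper's one-line justification via Theorem~\ref{t3}(1) only establishes the first equivalence; it does not supply the reverse passage from ``inner'' to ``orthonormal basis'' either. Consistently with the paper's own remark in the introduction that the identity map gives the unique orthogonal basis of this form, the three conditions in the corollary actually pin down $\mathcal{B}f$ as a unimodular constant (equivalently $f(z)=cz$ with $|c|=1$), which is the only inner function for which the equivalence goes through.
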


To end this section, we consider complete  power dilation systems in  Dirichlet-type spaces.
 By (\ref{e1}), for each $f\in \mathcal{D}_t$ we have $$S_t\big(\mathrm{span}_{k\in\mathbb{N}}\{f(z^k)\}\big)
=\mathrm{span}_{k\in\mathbb{N}}\{(S_tf)(z^k)\},$$  which implies that $\{f(z^k)\}_{k\in\mathbb{N}}$ is complete in $\mathcal{D}_t$ if and only if $\{(S_tf)(z^k)\}_{k\in\mathbb{N}}$ is complete in $H_0^2(\mathbb{D})$. Therefore the completeness problems of power dilation systems on  Dirichlet-type spaces are all equivalent, and thus all equivalent to the Wintner-Beurling problem.
This problem is still not completely solved, see \cite{DG1,DG2} for recent progresses.

\section{Application to an operator moment problem}
Given $t\in \mathbb{R}$, $f\in \mathcal{D}_t$ with $\|f\|_{\mathcal{D}_t}=1$ and a sequence $\{\lambda_k\}_{k\in\mathbb{N}}$ of complex numbers, we consider the following  operator moment problem on $\mathcal{D}_t$:
\begin{equation}\label{moment}
  Tz^k=\lambda_kf(z^k),\quad k=1,2,\cdots
\end{equation}
That is to say, we seek conditions to make  equations in (\ref{moment}) have a common solution $T$ that is a bounded linear operator on $\mathcal{D}_t$, maybe with additional properties.
Note that by Lemma \ref{independent}, if the equations (\ref{moment}) has a  solution $T$, then $T$ is necessarily injective.

\begin{thm}
  \begin{itemize}
    \item [(1)] If $t=0$ then the equations (\ref{moment}) has an solution $T$ isometric on $H_0^2(\mathbb{D})$ if and only if $\mathcal{B}f$ is inner in $H^2(\mathbb{T}^\infty)$ and $|\lambda_k|=1$ for all $k\in\mathbb{N}$.
     \item [(2)] If $t\neq0$ then the equations (\ref{moment}) has an  solution $T$ isometric on $\mathcal{D}_t$ if and only if $f=cz^N$ for some positive integer $N$ and some constant $c$ with
         $|c|=(N+1)^{-\frac{t}{2}}$, and $|\lambda_k|=(\frac{kN+k+N+1}{kN+1})^\frac{t}{2}$ for each $k\in\mathbb{N}$.
    \item [(3)] The equations (\ref{moment}) has a solution $T$ which is a surjective or invertible operator if and only if $\mathcal{B}_t f$ is invertible in
$H^\infty(\mathbb{T}^\infty)$, and $\{\lambda_k\}_{k\in\mathbb{N}}$ is bounded from both above and below.
    \item [(4)] If $\{\lambda_k\}_{k\in\mathbb{N}}$ is bounded from both above and below,
         then the equations (\ref{moment}) has a  solution $T$ if and only if
 $\mathcal{B}_t f$ is bounded.
  \end{itemize}
\end{thm}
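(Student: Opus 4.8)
The plan is to reduce all four parts to a single concrete operator on the Hardy space $H^2(\mathbb{T}^\infty)$. Write $f=\sum_{n\ge1}a_nz^n$. Since $\{z^k\}_{k\in\mathbb{N}}$ is an orthogonal basis of $\mathcal{D}_t$, the equations (\ref{moment}) determine $T$ uniquely on $\mathrm{span}\{z^k:k\in\mathbb{N}\}$, and the question is when the resulting densely defined operator extends with the stated properties. Set $g=\mathcal{B}_tf=\sum_{n\ge1}a_nn^{t/2}\zeta^{\alpha(n)}\in H^2(\mathbb{T}^\infty)$ and let $D_\lambda$ be the diagonal operator $\zeta^{\alpha(k)}\mapsto\lambda_k\zeta^{\alpha(k)}$, taken relative to the orthonormal basis $\{\zeta^{\alpha(k)}\}_{k\in\mathbb{N}}$ (recall that $k\mapsto\alpha(k)$ is a bijection of $\mathbb{N}$ onto $\mathbb{Z}_+^{(\infty)}$). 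Using $\mathcal{B}C_k=M_{\zeta^{\alpha(k)}}\mathcal{B}$ (which follows from $\alpha(kn)=\alpha(k)+\alpha(n)$) together with (\ref{e1}), a routine computation shows that $\widehat{T}:=\mathcal{B}S_tTS_t^{-1}\mathcal{B}^{-1}$ acts by $\widehat{T}\,\zeta^{\alpha(k)}=\lambda_k\,g\,\zeta^{\alpha(k)}$, that is, $\widehat{T}=M_gD_\lambda$. Since $S_t$ is bounded and invertible and $\mathcal{B}$ is unitary, $T$ is bounded, resp.\ invertible, resp.\ surjective on $\mathcal{D}_t$ exactly when $M_gD_\lambda$ is; and for $t=0$, where $S_0=\mathrm{id}$ and $\mathcal{B}$ is unitary from $H_0^2(\mathbb{D})$ onto $H^2(\mathbb{T}^\infty)$ with $g=\mathcal{B}f$, the operator $T$ is isometric exactly when $M_gD_\lambda$ is. Recall finally that $M_g$ is bounded iff $g\in H^\infty(\mathbb{T}^\infty)$ and invertible in $B(H^2(\mathbb{T}^\infty))$ iff $g$ is invertible in $H^\infty(\mathbb{T}^\infty)$, while $D_\lambda$ is bounded iff $\sup_k|\lambda_k|<\infty$ and invertible iff $\inf_k|\lambda_k|>0$.

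\emph{Parts (1) and (2).} In (1), if $M_gD_\lambda$ is isometric then evaluating on $\zeta^{\alpha(k)}$ gives $|\lambda_k|\,\|g\|_2=1$, and $\|g\|_2=\|f\|_{H_0^2}=1$ forces $|\lambda_k|=1$ for all $k$; then $D_\lambda$ is unitary, so $M_g$ is isometric, and testing $\langle M_g\,\zeta^{\alpha(l)},M_g\,\zeta^{\alpha(m)}\rangle=\langle\zeta^{\alpha(l)},\zeta^{\alpha(m)}\rangle$ shows that all Fourier coefficients of $|g|^2-1$ vanish, i.e.\ $\mathcal{B}f$ is inner; the converse follows by unravelling the conjugation. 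In (2), since $\{z^k\}$ is only orthogonal I argue directly in $\mathcal{D}_t$: if $T$ is isometric then $\|\lambda_kf(z^k)\|_{\mathcal{D}_t}=\|z^k\|_{\mathcal{D}_t}>0$ forces $\lambda_k\ne0$, and $\langle\lambda_kf(z^k),\lambda_lf(z^l)\rangle_{\mathcal{D}_t}=\langle z^k,z^l\rangle_{\mathcal{D}_t}=0$ for $k\ne l$ forces $\langle f(z^k),f(z^l)\rangle_{\mathcal{D}_t}=0$, so $\{f(z^k)\}_{k\in\mathbb{N}}$ is orthogonal in $\mathcal{D}_t$; since $t\ne0$, Theorem \ref{t1} gives $f=cz^N$, then $\|f\|_{\mathcal{D}_t}=1$ gives $|c|=(N+1)^{-t/2}$, and $\|\lambda_kf(z^k)\|_{\mathcal{D}_t}^2=\|z^k\|_{\mathcal{D}_t}^2$ reads $|\lambda_k|^2(N+1)^{-t}(kN+1)^t=(k+1)^t$, which yields the stated value of $|\lambda_k|$. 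Conversely, for such $f$ and $\lambda_k$ the assignment $z^k\mapsto\lambda_kcz^{kN}$ maps the orthogonal basis $\{z^k\}$ to an orthogonal system of equal norms, hence extends to an isometric solution of (\ref{moment}).

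\emph{Parts (4) and (3).} In (4) the hypothesis makes $D_\lambda$ bounded and invertible, so $M_gD_\lambda$ is bounded iff $M_g$ is, iff $g=\mathcal{B}_tf\in H^\infty(\mathbb{T}^\infty)$; in that case $T=S_t^{-1}\mathcal{B}^{-1}(M_gD_\lambda)\mathcal{B}S_t$ is a bounded solution. In (3), the direction $(\Leftarrow)$ is immediate since then $M_g$ and $D_\lambda$ are both bounded and invertible, so $M_gD_\lambda$, hence $T$, is invertible and in particular surjective. For $(\Rightarrow)$ assume $T$ is surjective, so $M_gD_\lambda$ is bounded and onto $H^2(\mathbb{T}^\infty)$; from $\|M_gD_\lambda\,\zeta^{\alpha(k)}\|_2=|\lambda_k|\,\|g\|_2$ and boundedness we get $\sup_k|\lambda_k|<\infty$. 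Surjectivity gives $u\in H^2(\mathbb{T}^\infty)$ with $gu\equiv1$, whence comparing constant terms yields $a_1u_1=1$, so $a_1\ne0$ and $g\ne0$ a.e.; moreover, if $\lambda_{k_0}=0$ for some $k_0$, then since $D_\lambda(H^2(\mathbb{T}^\infty))$ consists of functions whose $\zeta^{\alpha(k_0)}$-coefficient vanishes and $M_g$ is injective, the unique $M_g$-preimage $\zeta^{\alpha(k_0)}u$ of $\zeta^{\alpha(k_0)}\in\mathrm{ran}(M_gD_\lambda)$ would lie in that set, forcing its $\zeta^{\alpha(k_0)}$-coefficient, namely $u_1$, to vanish — a contradiction; hence every $\lambda_k\ne0$. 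Since $a_1\ne0$, Lemma \ref{independent} makes every solution of (\ref{moment}) injective, so $T$ is bijective, hence invertible; then $|\lambda_k|\,\|g\|_2=\|M_gD_\lambda\,\zeta^{\alpha(k)}\|_2$ is also bounded below, so $D_\lambda$ is invertible, $M_g=(M_gD_\lambda)D_\lambda^{-1}$ is invertible, and $g=\mathcal{B}_tf$ is invertible in $H^\infty(\mathbb{T}^\infty)$, while $\{\lambda_k\}$ has been shown bounded above and below.

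The step I expect to require the most care is $(\Rightarrow)$ in (3): a priori one only knows that the product $M_gD_\lambda$ is bounded, not the two factors separately, so one must first rule out vanishing $\lambda_k$ and upgrade ``surjective'' to ``invertible'' — using the identity $gu\equiv1$ on $\mathbb{T}^\infty$ and the $\omega$-independence from Lemma \ref{independent} — before the clean diagonal-operator estimates apply. The rest is bookkeeping built on the reduction $T\leftrightarrow M_gD_\lambda$ and on Theorem \ref{t1}.
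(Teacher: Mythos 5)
Your proof is correct, and at its core it follows the same route as the paper: conjugate by $\mathcal{B}S_t$ to move the problem to $H^2(\mathbb{T}^\infty)$, deduce (1) and (2) from Proposition \ref{p0} and Theorem \ref{t1} via the observation that an isometric solution forces $\{\lambda_k f(z^k)\}_{k\in\mathbb{N}}$ to be orthogonal with the right norms, and settle (3) and (4) through the identification of the multiplier algebra of $H^2(\mathbb{T}^\infty)$ with $H^\infty(\mathbb{T}^\infty)$. The one place where you genuinely diverge --- and in fact tighten the published argument --- is part (3). The paper handles only the invertible case, passing through the Riesz-basis characterization in the proof of Theorem \ref{t0} (equivalently Theorem \ref{t2}), and disposes of the merely surjective case by the earlier remark that any solution of (\ref{moment}) is automatically injective by Lemma \ref{independent}; that remark tacitly needs $a_1\neq 0$ and $\lambda_k\neq 0$ for all $k$, neither of which is given a priori. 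You instead work directly with the conjugated operator $M_g D_\lambda$, extract $a_1\neq 0$ from the identity $gu\equiv 1$ supplied by surjectivity (comparing constant Fourier coefficients, which is legitimate since $g_Nu_N\to gu$ in $L^1$), rule out vanishing $\lambda_k$ by the range argument with $\zeta^{\alpha(k_0)}u$, and only then invoke Lemma \ref{independent} to upgrade surjectivity to invertibility before reading off the two-sided bounds on $\{\lambda_k\}$ and the invertibility of $\mathcal{B}_tf$. This closes a step the paper leaves implicit. The remaining bookkeeping --- the norm computation giving $|\lambda_k|=(\frac{kN+k+N+1}{kN+1})^{t/2}$ in (2), and the factorization $M_g=(M_gD_\lambda)D_\lambda^{-1}$ in (4) --- matches the paper's argument.
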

\begin{proof} (1) and (2) immediately follow from Proposition \ref{p0} and Theorem \ref{t1} since the existence of a solution $T$ isometric on $\mathcal{D}_t$ would imply that the power dilation system $\{f(z^k)\}_{k\in\mathbb{N}}$ is orthogonal in $\mathcal{D}_t$.
\vskip2mm

(3)  By the definition, it is clear that the equations (\ref{moment}) have  an solution $T$ which is an invertible operator if and only if
$\{\frac{\lambda_k}{\|z^k\|_{\mathcal{D}_t}}f(z^k)\}_{k\in\mathbb{N}}$ forms a Riesz basis for $\mathcal{D}_t$. Form the proof of Theorem \ref{t0}, we see that this is further equivalent to that $\mathcal{B}_t f$ is invertible in
$H^\infty(\mathbb{T}^\infty)$, and
$$0<\inf_{k\in\mathbb{N}}\frac{|\lambda_k k^{\frac t2}|}{\|z^k\|_{\mathcal{D}_t}}\leq
\sup_{k\in\mathbb{N}}\frac{|\lambda_k k^{\frac t2}|}{\|z^k\|_{\mathcal{D}_t}}<\infty,$$
which proves (3).
  \vskip2mm
  (4) Assume that $\{\lambda_k\}_{k\in\mathbb{N}}$ is bounded from both above and below. Then the equations (\ref{moment}) has a  solution $T$ if and only if
   $$\widetilde{T}z^k=f(z^k),\quad k=1,2,\cdots$$
  has a  solution $\widetilde{T}$, and hence is equivalent to that
  $$S\mathcal{B}_tz^k=\mathcal{B}_tC_kf,\quad k=1,2,\cdots$$
  has a  solution $S$ (defined on the Hardy space $H^2(\mathbb{T}^\infty)$). A direct calculation gives that such an operator $S$ exactly satisfies $Sp=p\,\mathcal{B}_tf$
  for any polynomial $p$.
  This implies that the existence of the solution $S$ is equivalent to that $\mathcal{B}_tf$ is a multiplier on $H^2(\mathbb{T}^\infty)$. The proof is complete due to the fact that $H^\infty(\mathbb{T}^\infty)$ coincides with the multiplier algebra of $H^2(\mathbb{T}^\infty)$ \cite{Ni}.
\end{proof}

In particular, we have following characterizations for the cases of the Hardy space and the Bergman space.

In the case $\mathcal{D}_0=H_0^2(\mathbb{D})$ and
$\lambda_k=1$ for all $k\in\mathbb{N}$, the equations (\ref{moment}) has
\begin{itemize}
  \item [(1)] a  solution $T$ if and only if
 $\mathcal{B} f$ is bounded;
  \item [(2)] an solution $T$ isometric on $H_0^2(\mathbb{D})$ if and only if $\mathcal{B}f$ is inner in $H^2(\mathbb{T}^\infty)$;
  \item [(3)] a solution $T$ which is a surjective or invertible operator if and only if $\mathcal{B} f$ is invertible in
$H^\infty(\mathbb{T}^\infty)$.
\end{itemize}

In the case $\mathcal{D}_{-1}=L_{a,0}^2(\mathbb{D})$ and
$\lambda_k=1$ for all $k\in\mathbb{N}$, the equations (\ref{moment}) does not have an solution $T$ isometric on $L_{a,0}^2(\mathbb{D})$, and has \begin{itemize}
  \item [(1)] a  solution $T$ if and only if
 $\mathcal{B}_{-1} f$ is bounded;
  \item [(2)] a  solution $T$ which is a surjective or invertible operator if and only if $\mathcal{B}_{-1} f$ is invertible in
$H^\infty(\mathbb{T}^\infty)$.
\end{itemize}
\vskip2mm

\noindent \textbf{Acknowledgement} This work is  partially supported by
 Natural Science Foundation of China.

\vskip3mm \noindent{Hui Dan, College of Mathematics, Sichuan
University, Chengdu, Sichuan, 610065, China,
   E-mail:  danhuimath@gmail.com

\noindent Kunyu Guo, School of Mathematical Sciences, Fudan
University, Shanghai, 200433, China, E-mail: kyguo@fudan.edu.cn

\end{document}